\newtheorem{theorem}{Theorem}
\newtheorem{definition}[theorem]{Definition}
\newtheorem{notation}[theorem]{Notation}
\newenvironment{proof}[1][Proof]{\noindent\textbf{#1.} }{\ \rule{0.5em}{0.5em}}
\begin{document}

\title{Homogenization of monotone parabolic problems with an arbitrary
number of spatial and temporal scales\noindent \bigskip \noindent }
\author{T. Danielsson, L. Flod\'{e}n, P. Johnsen, M. Olsson Lindberg \\
tatiana.danielsson@miun.se, liselott.floden@miun.se,\\
pernilla.johnsen@miun.se, marianne.olssonlindberg@miun.se\\
Department of Mathematics and Science Education,\\
Mid Sweden University, S-83125 \"{O}stersund, Sweden}
\date{}
\maketitle

\begin{abstract}
\vspace{-0.1cm}In this paper we prove a general homogenization result for
monotone parabolic problems with an arbitrary number of microscopic scales
in space as well as in time, where the scale functions are not necessarily
powers of epsilon. The main tools for the homogenization procedure are
multiscale convergence and very weak multiscale convergence, both adapted to
evolution problems. At the end of the paper an example is given to
concretize the use of the main result.
\end{abstract}

\section{Introduction}

The mathematical theory of nonlinear partial differential equations plays an
important role in e.g. applied mathematics and physics. In this paper we
present a homogenization result for the general monotone parabolic problem
with multiple spatial and temporal scales%
\begin{eqnarray}
\partial _{t}u^{\varepsilon }\left( x,t\right) -\nabla \cdot a\left( \frac{x%
}{\hat{\varepsilon}_{1}},\ldots ,\frac{x}{\hat{\varepsilon}_{n}},\frac{t}{%
\check{\varepsilon}_{1}},\ldots ,\frac{t}{\check{\varepsilon}_{m}},\nabla
u^{\varepsilon }\left( x,t\right) \right) \!\!\! &=&\!\!\!f\left( x,t\right)
\,\text{\thinspace in\ }\Omega _{T}\text{,}  \notag \\
u^{\varepsilon }\left( x,t\right) \!\!\! &=&\!\!\!0\,\text{\thinspace
\thinspace on \thinspace }\partial \Omega \!\!\times \!\!(0,T)\text{,}%
\,\,\,\,\,{}~~~~  \label{first one} \\
u^{\varepsilon }\left( x,0\right) \!\!\! &=&\!\!\!u^{0}\left( x\right) \,%
\text{\thinspace in\ }\Omega \text{,}  \notag
\end{eqnarray}%
where $f\in L^{2}(\Omega _{T})$ and $u^{0}\in L^{2}(\Omega )$. Here $\Omega $
is an open bounded set in $%
\mathbb{R}
^{N}$ with smooth boundary and $\Omega _{T}=\Omega \times (0,T)$. We let $%
Y=(0,1)^{N}$ and $S=(0,1)$ and we assume that $a$ is $Y$-periodic in the $n$
first variables and $S$-periodic in the following $m$ variables. Finally we
let $\hat{\varepsilon}_{k}$ for $k=1,\ldots ,n$ and $\check{\varepsilon}_{j}$
for $j=1,\ldots ,m$ be scale functions depending on $\varepsilon $ that tend
to zero as $\varepsilon $ does, where the scales are assumed to fulfil
certain conditions of separatedness.

The homogenization of (\ref{first one}) means studying the asymptotic
behavior of the corresponding sequence of solutions $u^{\varepsilon }$ as $%
\varepsilon $ tends to zero and finding the limit problem 
\begin{eqnarray*}
\partial _{t}u\left( x,t\right) -\nabla \cdot b\left( x,t,\nabla u\left(
x,t\right) \right) &=&f\left( x,t\right) \text{ in }\Omega _{T}\text{,} \\
u\left( x,t\right) &=&0\text{ on }\partial \Omega \times (0,T)\text{,} \\
u\left( x,0\right) &=&u^{0}\left( x\right) \text{ in }\Omega \text{,}
\end{eqnarray*}%
which admits the function $u$, the limit of $\left\{ u^{\varepsilon
}\right\} $, as its unique solution. Here $b$ is characterized by local
problems, one for each microscopic spatial scale. For more informative texts
on homogenization theory we suggest e.g. \cite{Al1}, \cite{CoDo} and \cite%
{LNW}.

The main tools to carry out the homogenization process for (\ref{first one})
are multiscale convergence and very weak multiscale convergence in the
evolution setting. Here very weak multiscale convergence, see e.g. \cite%
{FHOP} and \cite{FHOLP}, is the key to handling the difficulties that appear
when rapid time oscillations are present. The nonlinearity of the problem is
treated by applying the perturbed test functions method.

Homogenization results for linear parabolic equations with oscillations in
one spatial scale and one temporal scale were studied by using asymptotic
expansions in \cite{BLP}. In \cite{Ho1} parabolic problems containing fast
oscillations in space as well as in time were treated for the first time
applying two-scale convergence methods. Parabolic homogenization problems
have also been investigated in e.g. \cite{FlOl1} and \cite{FHOLP1}\ for
different choices of fixed scales. Linear parabolic problems with an
arbitrary number of scales in both space and time were homogenized in \cite%
{FHOLP}. Homogenization results for monotone, not necessarily linear,
problems have been presented in e.g. \cite{FlOl}, \cite{NgWo}, \cite{FHOS}, 
\cite{Wo1} and \cite{Wo2}. The case with one spatial microscale and an
arbitrary number of temporal scales was treated by Persson in \cite{Per1}.

The paper is organized in the following way. In Section 2 we give some
preparatory theory concerning multiscale and very weak multiscale
convergence. In Section 3 we present the homogenization result for (\ref%
{first one}) and in the last section we look at a special case of (\ref%
{first one}) to illustrate the use of the presented result.

\begin{notation}
We let $F_{\sharp }(Y)$ be the space of all functions in $F_{loc}(%
\mathbb{R}
^{N})$ which are the periodic repetition of some function in $F(Y)$. We also
let $Y_{k}=Y$ for $k=1,\ldots ,n$, $Y^{n}=Y_{1}\times \cdots \times Y_{n}$
(the nN-dimensional open unit cell), $y^{n}=y_{1},\ldots ,y_{n}$
(corresponding local spatial multivariable), $dy^{n}=dy_{1}\cdots dy_{n}$, $%
S_{j}=S$ for $j=1,\ldots ,m$, $S^{m}=S_{1}\times \cdots \times S_{m}$ (the
m-dimensional open unit cell), $s^{m}=s_{1},\ldots ,s_{m}$ (corresponding
local temporal multivariable), $ds^{m}=ds_{1}\cdots ds_{m}$ and $\mathcal{Y}%
_{n,m}=Y^{n}\times S^{m}$, where we interpret $\mathcal{Y}_{0,m}$ as $S^{m}$%
. We let $\hat{\varepsilon}_{k}\left( \varepsilon \right) $, for $k=1,\ldots
,n$, and $\check{\varepsilon}_{j}\left( \varepsilon \right) $, $j=1,\ldots
,m $, be strictly positive functions such that $\hat{\varepsilon}_{k}\left(
\varepsilon \right) $ and $\check{\varepsilon}_{j}\left( \varepsilon \right) 
$ go to zero when $\varepsilon $ does. We also use the notations $\hat{%
\varepsilon}^{n}=\hat{\varepsilon}_{1},\ldots ,\hat{\varepsilon}_{n}$ and $%
\check{\varepsilon}^{m}=\check{\varepsilon}_{1},\ldots ,\check{\varepsilon}%
_{m}$ and furthermore $\frac{x}{\hat{\varepsilon}^{n}}$ denotes $\frac{x}{%
\hat{\varepsilon}_{1}},\ldots ,\frac{x}{\hat{\varepsilon}_{n}}$ and,
similarly, by $\frac{t}{\check{\varepsilon}^{m}}$ we mean $\frac{t}{\check{%
\varepsilon}_{1}},\ldots ,\frac{t}{\check{\varepsilon}_{m}}$.
\end{notation}

\section{Multiscale and very weak multiscale convergence}

In \cite{Ng1} Nguetseng presented a new homogenization technique based on a
certain type of convergence\ which has become known as two-scale
convergence.\ This was extended in \cite{AlBr} to so-called multiscale
convergence, which allows use of multiple scales and makes it possible to
capture numerous types of spatial microscopic oscillations. Below we define
evolution multiscale convergence, which is the further development of
multiscale convergence to include temporal oscillations, see also \cite%
{FHOLP}.

\begin{definition}
\label{Defintion multiscale}A sequence $\left\{ u^{\varepsilon }\right\} $
in $L^{2}(\Omega _{T})$ is said to $(n+1,m+1)$-scale converge to $u_{0}\in
L^{2}(\Omega _{T}\times \mathcal{Y}_{n,m})$ if 
\begin{gather*}
\int_{\Omega _{T}}u^{\varepsilon }\left( x,t\right) v\left( x,t,\frac{x}{%
\hat{\varepsilon}^{n}},\frac{t}{\check{\varepsilon}^{m}}\right) dxdt \\
\rightarrow \int_{\Omega _{T}}\int_{\mathcal{Y}_{n,m}}u_{0}\left(
x,t,y^{n},s^{m}\right) v\left( x,t,y^{n},s^{m}\right) dy^{n}ds^{m}dxdt
\end{gather*}%
for any $v\in L^{2}\left( \Omega _{T};C_{\sharp }(\mathcal{Y}_{n,m})\right) $%
. We write 
\begin{equation*}
u^{\varepsilon }\left( x,t\right) \overset{n+1,m+1}{\rightharpoonup }%
u_{0}\left( x,t,y^{n},s^{m}\right) \text{.}
\end{equation*}
\end{definition}

Next we define some concepts regarding relations between scale functions.

\begin{definition}
We say that the scales in a list $\left\{ \varepsilon _{1},\ldots
,\varepsilon _{n}\right\} $ are separated if%
\begin{equation*}
\lim_{\varepsilon \rightarrow 0}\frac{\varepsilon _{k+1}}{\varepsilon _{k}}=0
\end{equation*}%
for $k=1,\ldots ,n-1$ and that the scales are well-separated if there exists
a positive integer $l$ such that%
\begin{equation*}
\lim_{\varepsilon \rightarrow 0}\frac{1}{\varepsilon _{k}}\left( \frac{%
\varepsilon _{k+1}}{\varepsilon _{k}}\right) ^{l}=0
\end{equation*}%
for $k=1,\ldots ,n-1$.
\end{definition}

\begin{definition}
\label{Lists of scales}Let $\left\{ \hat{\varepsilon}_{1},\ldots ,\hat{%
\varepsilon}_{n}\right\} $ and $\left\{ \check{\varepsilon}_{1},\ldots ,%
\check{\varepsilon}_{m}\right\} $ be lists of (well-)separated scales.
Collect all elements from both lists in one common list. If from possible
duplicates, where by duplicates we mean scales which tend to zero equally
fast, one member of each pair is removed and the list in order of magnitude
of all the remaining elements is (well-)separated, the lists $\left\{ \hat{%
\varepsilon}_{1},\ldots ,\hat{\varepsilon}_{n}\right\} $ and $\left\{ \check{%
\varepsilon}_{1},\ldots ,\check{\varepsilon}_{m}\right\} $ are said to be
jointly (well-)separated.
\end{definition}

We give the two following theorems\ which state a compactness result for $%
\left( n+1,m+1\right) $-scale convergence and a characterization of
multiscale limits for gradients, respectively.

\begin{theorem}
\label{thmultiskale}Let $\left\{ u^{\varepsilon }\right\} $ be a bounded
sequence in $L^{2}(\Omega _{T})$ and suppose that the lists $\left\{ \hat{%
\varepsilon}_{1},\ldots ,\hat{\varepsilon}_{n}\right\} $ and $\left\{ \check{%
\varepsilon}_{1},\ldots ,\check{\varepsilon}_{m}\right\} $ are jointly
separated. Then there exists a $u_{0}$ in $L^{2}(\Omega _{T}\times \mathcal{Y%
}_{n,m})$ such that, up to a subsequence, 
\begin{equation*}
u^{\varepsilon }\left( x,t\right) \overset{n+1,m+1}{\rightharpoonup }%
u_{0}\left( x,t,y^{n},s^{m}\right) \text{.}
\end{equation*}
\end{theorem}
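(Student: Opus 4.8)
The plan is to prove this compactness result for $(n+1,m+1)$-scale convergence by reducing it, via a suitable separation of variables and a standard functional-analytic argument, to the familiar case of two-scale convergence. First I would observe that by the Notation section the oscillating test functions $v(x,t,x/\hat\varepsilon^n,t/\check\varepsilon^m)$ with $v\in L^2(\Omega_T;C_\sharp(\mathcal{Y}_{n,m}))$ are controlled in $L^2(\Omega_T)$ by $\|v\|_{L^2(\Omega_T;C_\sharp(\mathcal{Y}_{n,m}))}$; this follows from the mean-value property for periodic functions along the rescaled arguments, which requires exactly the joint separatedness hypothesis so that the combined list of spatial and temporal scales, after deleting duplicates, is a genuine list of separated scales and the iterated averaging limit exists. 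I would state this as the key a priori estimate: for each such $v$,
\begin{equation*}
\limsup_{\varepsilon\to 0}\left|\int_{\Omega_T}u^\varepsilon v\Bigl(x,t,\tfrac{x}{\hat\varepsilon^n},\tfrac{t}{\check\varepsilon^m}\Bigr)\,dx\,dt\right|\le C\,\|v\|_{L^2(\Omega_T;C_\sharp(\mathcal{Y}_{n,m}))},
\end{equation*}
where $C=\sup_\varepsilon\|u^\varepsilon\|_{L^2(\Omega_T)}$.

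Next I would set up the extraction of the limit. Since $L^2(\Omega_T;C_\sharp(\mathcal{Y}_{n,m}))$ is a separable Banach space, the functionals $L_\varepsilon(v):=\int_{\Omega_T}u^\varepsilon v(x,t,x/\hat\varepsilon^n,t/\check\varepsilon^m)\,dx\,dt$ form a bounded sequence in its dual; by the sequential Banach–Alaoglu theorem there is a subsequence along which $L_\varepsilon \rightharpoonup^* L$ for some bounded functional $L$ on $L^2(\Omega_T;C_\sharp(\mathcal{Y}_{n,m}))$. The a priori estimate above shows $|L(v)|\le C\|v\|_{L^2(\Omega_T;C_\sharp(\mathcal{Y}_{n,m}))}$, but more importantly that $L$ is continuous for the $L^2(\Omega_T\times\mathcal{Y}_{n,m})$-norm on the dense subspace $L^2(\Omega_T;C_\sharp(\mathcal{Y}_{n,m}))\subset L^2(\Omega_T\times\mathcal{Y}_{n,m})$; hence $L$ extends uniquely to a bounded linear functional on the Hilbert space $L^2(\Omega_T\times\mathcal{Y}_{n,m})$, and by Riesz representation there is $u_0\in L^2(\Omega_T\times\mathcal{Y}_{n,m})$ with $L(v)=\int_{\Omega_T}\int_{\mathcal{Y}_{n,m}}u_0 v\,dy^n\,ds^m\,dx\,dt$. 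Unwinding the definitions, this says precisely that $u^\varepsilon \overset{n+1,m+1}{\rightharpoonup} u_0$ along the subsequence.

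The main obstacle is the a priori estimate, i.e., establishing the uniform bound and the mean-value limit for products of periodic functions in the several rescaled spatial and temporal variables simultaneously. One first reduces to $v$ of the separated form $\varphi(x,t)\,w_1(y_1)\cdots w_n(y_n)\,\sigma_1(s_1)\cdots\sigma_m(s_m)$ with $\varphi\in C(\overline{\Omega_T})$ and the $w_k,\sigma_j$ smooth periodic, since finite sums of such products are dense. For these one invokes the generalized Riemann–Lebesgue lemma iteratively: joint separatedness guarantees that each scale is either strictly faster or strictly slower than every other (after merging duplicates into a single variable), so one can peel off the oscillations one scale at a time, each time replacing the fastest remaining periodic factor by its average, and the error terms are controlled uniformly in $\varepsilon$ by the sup-norms. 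I would cite the corresponding result from \cite{FHOLP} (and \cite{AlBr}, \cite{Ng1}) for the rescaled mean-value property rather than reprove it, and note that the passage from the separated dense class to general $v\in L^2(\Omega_T;C_\sharp(\mathcal{Y}_{n,m}))$ is then routine by density together with the uniform bound. This completes the proof.
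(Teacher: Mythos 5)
The paper gives no proof of this theorem at all; it simply cites Theorem 2.66 of \cite{Per2} and Theorem A.1 of \cite{FHOLP}, whose argument is exactly the standard one you reproduce: uniform bound, Banach--Alaoglu in the dual of the separable test space $L^{2}(\Omega _{T};C_{\sharp }(\mathcal{Y}_{n,m}))$, admissibility of the oscillating test functions via the iterated mean-value property for jointly separated scales, and Riesz representation in $L^{2}(\Omega _{T}\times \mathcal{Y}_{n,m})$. Your outline is correct; the only imprecision is that your displayed estimate with the $C_{\sharp }$-norm on the right neither needs separatedness (it is just a pointwise sup bound plus Cauchy--Schwarz, valid for every $\varepsilon $) nor suffices for the Riesz extension step --- what is actually needed, and what you do supply in your final paragraph, is the convergence $\Vert v(x,t,x/\hat{\varepsilon}^{n},t/\check{\varepsilon}^{m})\Vert _{L^{2}(\Omega _{T})}\rightarrow \Vert v\Vert _{L^{2}(\Omega _{T}\times \mathcal{Y}_{n,m})}$, which is the point where joint separation of the scales genuinely enters.
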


\begin{proof}
See Theorem 2.66 in \cite{Per2} or Theorem A.1. in \cite{FHOLP}.
\end{proof}

The space $W_{2}^{1}(0,T;H_{0}^{1}(\Omega ),L^{2}(\Omega ))$ that appears in
the theorem below is the space of all functions in $L^{2}(0,T;H_{0}^{1}(%
\Omega ))$ such that the time derivative belongs to $L^{2}(0,T;H^{-1}(\Omega
))$.

\begin{theorem}
\label{gradkarmulti}Let $\left\{ u^{\varepsilon }\right\} $ be a bounded
sequence in $W_{2}^{1}(0,T;H_{0}^{1}(\Omega ),L^{2}(\Omega ))$ and suppose
that the lists $\left\{ \hat{\varepsilon}_{1},\ldots ,\hat{\varepsilon}%
_{n}\right\} $ and $\left\{ \check{\varepsilon}_{1},\ldots ,\check{%
\varepsilon}_{m}\right\} $ are jointly well-separated. Then, up to a
subsequence,%
\begin{eqnarray*}
u^{\varepsilon }\left( x,t\right) &\rightarrow &u\left( x,t\right) \text{ in 
}L^{2}(\Omega _{T})\text{,} \\
u^{\varepsilon }\left( x,t\right) &\rightharpoonup &u\left( x,t\right) \text{
in }L^{2}(0,T;H_{0}^{1}(\Omega ))
\end{eqnarray*}%
and%
\begin{equation*}
\nabla u^{\varepsilon }\left( x,t\right) \overset{n+1,m+1}{\rightharpoonup }%
\nabla u\left( x,t\right) +\sum\limits_{j=1}^{n}\nabla _{y_{j}}u_{j}\left(
x,t,y^{j},s^{m}\right)
\end{equation*}%
where $u\in W_{2}^{1}(0,T;H_{0}^{1}(\Omega ),L^{2}(\Omega ))$ and $u_{j}\in
L^{2}(\Omega _{T}\times \mathcal{Y}_{j-1,m};H_{\sharp }^{1}(Y_{j})/%
\mathbb{R}
)$ for $j=1,\ldots ,n$.
\end{theorem}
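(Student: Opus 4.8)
First I would extract the subsequences along which all convergences hold. From the bound on $\{u^{\varepsilon}\}$ in $W_{2}^{1}(0,T;H_{0}^{1}(\Omega),L^{2}(\Omega))$, a subsequence converges weakly in $L^{2}(0,T;H_{0}^{1}(\Omega))$ to some $u$, and its distributional time derivative is the weak limit of $\partial_{t}u^{\varepsilon}$ in $L^{2}(0,T;H^{-1}(\Omega))$, so $u^{\varepsilon}\rightharpoonup u$ in $L^{2}(0,T;H_{0}^{1}(\Omega))$ with $u\in W_{2}^{1}(0,T;H_{0}^{1}(\Omega),L^{2}(\Omega))$. Combining the compact embedding $H_{0}^{1}(\Omega)\hookrightarrow\hookrightarrow L^{2}(\Omega)\hookrightarrow H^{-1}(\Omega)$ with the bound on $\partial_{t}u^{\varepsilon}$, the Aubin--Lions lemma gives, along a further subsequence, $u^{\varepsilon}\rightarrow u$ strongly in $L^{2}(\Omega_{T})$. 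Since $\{\nabla u^{\varepsilon}\}$ is bounded in $L^{2}(\Omega_{T})^{N}$ and the lists of scales are jointly well-separated, Theorem~\ref{thmultiskale} applied componentwise provides, along a further subsequence, a $w_{0}\in L^{2}(\Omega_{T}\times\mathcal{Y}_{n,m})^{N}$ with $\nabla u^{\varepsilon}\overset{n+1,m+1}{\rightharpoonup}w_{0}$. Testing this scale convergence against functions in $C_{0}^{\infty}(\Omega_{T})^{N}$ with no fast argument, and comparing with the weak $L^{2}$-limit $\nabla u$ of $\nabla u^{\varepsilon}$, gives $\int_{\mathcal{Y}_{n,m}}w_{0}\,dy^{n}ds^{m}=\nabla u$. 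Hence the whole matter reduces to showing $w_{0}-\nabla u=\sum_{j=1}^{n}\nabla_{y_{j}}u_{j}$ with $u_{j}$ depending only on $(x,t,y^{j},s^{m})$ and lying in $L^{2}(\Omega_{T}\times\mathcal{Y}_{j-1,m};H_{\sharp}^{1}(Y_{j})/\mathbb{R})$.

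The core of the argument is this identification, which I would carry out hierarchically, peeling off the spatial scales from the finest $y_{n}$ down to $y_{1}$. Suppose $u_{n},\dots,u_{k+1}$ have been found, so that, by the previous steps, the remainder $w_{0}-\nabla u-\sum_{j>k}\nabla_{y_{j}}u_{j}$ is independent of $y_{k+1},\dots,y_{n}$. One tests $\nabla u^{\varepsilon}$ against an admissible $(n+1,m+1)$-scale test function built from $v(x,t)\,c\!\left(\frac{x}{\hat{\varepsilon}^{k}},\frac{t}{\check{\varepsilon}^{m}}\right)$, where $v\in C_{0}^{\infty}(\Omega_{T})$ and $c$ is smooth, $Y^{k}\times S^{m}$-periodic, divergence-free in each of $y_{1},\dots,y_{k}$ and of zero mean in $y_{k}$, the finer oscillations being suitably modulated by the coarser ones. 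Integrating by parts transfers the gradient onto $u^{\varepsilon}$; all singular contributions of order $\hat{\varepsilon}_{j}^{-1}$ cancel by the solenoidality of $c$, the cross-scale remainders produced by the modulation vanish by joint well-separatedness, and the strong convergence $u^{\varepsilon}\rightarrow u$ together with the scale convergence of $\nabla u^{\varepsilon}$ let one pass to the limit. One concludes that the above remainder is $L^{2}$-orthogonal to every such $c$, and a periodic de~Rham (Weyl) decomposition in the variable $y_{k}$, carried out for a.e.\ $(x,t,y^{k-1},s^{m})$, produces $u_{k}\in L^{2}(\Omega_{T}\times\mathcal{Y}_{k-1,m};H_{\sharp}^{1}(Y_{k})/\mathbb{R})$ with the required $\nabla_{y_{k}}u_{k}$; subtracting it and averaging over $Y_{k}$ leaves a field independent of $y_{k}$, and the descent continues. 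After $n$ steps the remaining field is independent of all of $y^{n}$, hence equals $\nabla u$ by the reduction above, which yields the decomposition and, by construction, the stated regularity. Throughout, the temporal variables $s^{m}$ enter only as inert parameters, no fast time derivative of $u^{\varepsilon}$ being present, so no $\nabla_{s}$-correctors arise.

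I expect the main obstacle to be precisely this identification step, in two respects. First, one must construct the modulated multiscale test functions so that they remain admissible for $(n+1,m+1)$-scale convergence and so that all error terms carrying a ratio $\hat{\varepsilon}_{j+1}/\hat{\varepsilon}_{j}$ — possibly multiplied by factors bounded only in a Sobolev norm — tend to zero; this is exactly where the well-separatedness of the scales, and not merely their separatedness, is used, since such errors come with a fixed power $l$ of the ratio. Second, a density and mollification argument in the fast variables is needed, both to pass from smooth solenoidal test fields to arbitrary ones and to legitimately use the correctors $u_{j}$, a priori only in $H_{\sharp}^{1}$, as test functions; one must also check that the correctors extracted at earlier stages do not reintroduce a dependence on $y_{k}$, which is guaranteed by the nested structure. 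The routine points — the weak-compactness extraction in $W_{2}^{1}$, the Aubin--Lions step, and the standard periodic de~Rham lemma — I would not belabour.
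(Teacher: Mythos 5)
The paper does not actually prove this theorem; its ``proof'' is a citation of Theorem 2.74 in \cite{Per2} and Theorem 4 in \cite{FHOLP}, and your outline reproduces essentially the argument given in those references: weak compactness plus Aubin--Lions for the first two convergences, componentwise multiscale compactness for $\nabla u^{\varepsilon}$, and the Allaire--Briane-type identification of the limit by peeling off the spatial scales with solenoidal, zero-mean test fields followed by a periodic de~Rham decomposition, with the temporal variables riding along as parameters. Your sketch is correct in structure and correctly locates the role of well-separatedness (the iterated integration by parts producing the power $l$ of the scale ratios); the only quibble is that requiring the test field to be divergence-free in \emph{all} of $y_{1},\ldots,y_{k}$ is unnecessarily restrictive (and would thin the test class needed for the de~Rham step) --- solenoidality in the scale being peeled suffices, since the coarser divergences carry vanishing prefactors.
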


\begin{proof}
See Theorem 2.74 in \cite{Per2} or\ Theorem 4 in \cite{FHOLP}.
\end{proof}

Multiscale convergence is very useful for homogenization of problems
involving rapid oscillations on several micro levels. Unfortunately, we can
only use this for sequences bounded in the $L^{2}$-norm and when rapid time
oscillations are present we encounter sequences that do not possess this
boundedness. Multiscale convergence has a large class of test functions and
the limit captures both the global trend and the microscopic oscillations.
If we downsize this class to only capture the microscopic fluctuations it
becomes possible to handle certain sequences that are not required to be
bounded in any Lebesgue space. This is the idea behind so-called very weak
multiscale convergence. A first compactness result of very weak multiscale
convergence type was given in \cite{Ho1}, see also \cite{NgWo}, \cite{FHOP}
and \cite{FHOLP2}.

\begin{definition}
A sequence $\left\{ w^{\varepsilon }\right\} $ in $L^{1}(\Omega _{T})$ is
said to $(n+1,m+1)$-scale converge very weakly to $w_{0}\in L^{1}(\Omega
_{T}\times \mathcal{Y}_{n,m})$ if%
\begin{gather*}
\int_{\Omega _{T}}w^{\varepsilon }\left( x,t\right) v_{1}\left( x,\frac{x}{%
\hat{\varepsilon}_{1}},\ldots ,\frac{x}{\hat{\varepsilon}_{n-1}}\right)
c\left( t,\frac{t}{\check{\varepsilon}_{1}},\ldots ,\frac{t}{\check{%
\varepsilon}_{m}}\right) v_{2}\left( \frac{x}{\hat{\varepsilon}_{n}}\right)
dxdt \\
\rightarrow \int_{\Omega _{T}}\int_{\mathcal{Y}_{n,m}}w_{0}\left(
x,t,y^{n},s^{m}\right) v_{1}(x,y^{n-1})c(t,s^{m})v_{2}(y_{n})dy^{n}ds^{m}dxdt
\end{gather*}%
for any $v_{1}\in D(\Omega ;C_{\sharp }^{\infty }(Y^{n-1})),$ $v_{2}\in
C_{\sharp }^{\infty }\left( Y_{n}\right) /%
\mathbb{R}
$ and $c\in D(0,T;C_{\sharp }^{\infty }\left( S^{m}\right) )$ where 
\begin{equation*}
\int_{Y_{n}}w_{0}\left( x,t,y^{n},s^{m}\right) dy_{n}=0.
\end{equation*}%
We write%
\begin{equation*}
w^{\varepsilon }\left( x,t\right) \underset{vw}{\overset{n+1,m+1}{%
\rightharpoonup }}w_{0}\left( x,t,y^{n},s^{m}\right) \text{.}
\end{equation*}
\end{definition}

The following theorem is essential for the homogenization of (\ref{first one}%
).

\begin{theorem}
\label{T vw}Let $\left\{ u^{\varepsilon }\right\} $ be a bounded sequence in 
$W_{2}^{1}(0,T;H_{0}^{1}(\Omega ),L^{2}(\Omega ))$ and assume that the lists 
$\left\{ \hat{\varepsilon}_{1},\ldots ,\hat{\varepsilon}_{n}\right\} $ and $%
\left\{ \check{\varepsilon}_{1},\ldots ,\check{\varepsilon}_{m}\right\} $
are jointly well-separated. Then there exists a subsequence such that%
\begin{equation*}
\frac{u^{\varepsilon }\left( x,t\right) }{\varepsilon _{n}}\overset{n+1,m+1}{%
\underset{vw}{\rightharpoonup }}u_{n}\left( x,t,y^{n},s^{m}\right) \text{,}
\end{equation*}%
where, for $n=1,2,\ldots $, $u_{n}\in L^{2}(\Omega _{T}\times \mathcal{Y}%
_{n-1,m};H_{\sharp }^{1}(Y_{n})/%
\mathbb{R}
)$ are the same as in Theorem \ref{gradkarmulti}.
\end{theorem}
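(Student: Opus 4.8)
The plan is to combine the gradient characterization in Theorem~\ref{gradkarmulti} with a suitable test-function argument that extracts the corrector $u_n$ from the scaled sequence $u^{\varepsilon}/\varepsilon_n$. First I would take the bounded sequence $\{u^{\varepsilon}\}$ in $W_{2}^{1}(0,T;H_{0}^{1}(\Omega),L^{2}(\Omega))$ and apply Theorem~\ref{gradkarmulti} to pass to a subsequence along which $\nabla u^{\varepsilon}$ $(n+1,m+1)$-scale converges to $\nabla u+\sum_{j=1}^{n}\nabla_{y_j}u_j$. The scaled quantity $u^{\varepsilon}/\varepsilon_n$ is \emph{not} bounded in $L^{2}(\Omega_T)$, so ordinary multiscale compactness does not apply; instead I would work directly with the definition of very weak $(n+1,m+1)$-scale convergence, testing against functions of the product form $v_1(x,x/\hat\varepsilon_1,\ldots,x/\hat\varepsilon_{n-1})\,v_2(t,t/\check\varepsilon_1,\ldots,t/\check\varepsilon_m)\,c(x/\hat\varepsilon_n)$ with $v_1\in D(\Omega;C^{\infty}_{\sharp}(Y^{n-1}))$, $c\in C^{\infty}_{\sharp}(Y_n)/\mathbb{R}$, $v_2\in D(0,T;C^{\infty}_{\sharp}(S^m))$.

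The key computation is an integration by parts in the $x$-variable. Writing $\Phi^{\varepsilon}(x,t)=v_1(x,x/\hat\varepsilon^{n-1})v_2(t,t/\check\varepsilon^m)$ and choosing a vector potential for $c$, i.e.\ picking $\rho\in (C^{\infty}_{\sharp}(Y_n))^N$ with $\nabla_{y_n}\cdot\rho(y_n)=c(y_n)$ (possible since $c$ has mean zero), one rewrites
\begin{equation*}
\int_{\Omega_T}\frac{u^{\varepsilon}}{\varepsilon_n}\,\Phi^{\varepsilon}\,c\!\left(\tfrac{x}{\hat\varepsilon_n}\right)dxdt
=\int_{\Omega_T}\frac{u^{\varepsilon}}{\varepsilon_n}\,\Phi^{\varepsilon}\,\nabla_{y_n}\!\cdot\rho\!\left(\tfrac{x}{\hat\varepsilon_n}\right)dxdt,
\end{equation*}
and then uses $\hat\varepsilon_n\nabla_x\big[\rho(x/\hat\varepsilon_n)\big]=(\nabla_{y_n}\cdot\rho)(x/\hat\varepsilon_n)+O(\hat\varepsilon_n)$-type identities together with $\varepsilon_n/\hat\varepsilon_n\to$ const (here one must be careful about which scale $\varepsilon_n$ refers to — presumably $\varepsilon_n=\hat\varepsilon_n$ in this normalization) to move the $1/\varepsilon_n$ onto a gradient. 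After integrating by parts in $x$, the factor $u^{\varepsilon}/\varepsilon_n$ is replaced by $\nabla u^{\varepsilon}$ against $\rho(x/\hat\varepsilon_n)\Phi^{\varepsilon}$ plus lower-order terms involving $\nabla_x\Phi^{\varepsilon}$, which are $O(1)$ and vanish in the limit after cancellation (the mean-zero property of $c$ kills the $\nabla u$ and the $\nabla_{y_j}u_j$ contributions for $j<n$). Passing to the limit via Theorem~\ref{gradkarmulti} and the multiscale convergence of $\nabla u^{\varepsilon}$ then produces a limit of the form $\int\int \big(\nabla u+\sum_j\nabla_{y_j}u_j\big)\cdot\rho(y_n)\,\Phi\,dy^n ds^m dxdt$, and integrating back by parts in $y_n$ identifies the very weak limit $u_n$ with $u_n(x,t,y^n,s^m)$, forced to have mean zero in $y_n$.

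Then I would establish the regularity claim $u_n\in L^{2}(\Omega_T\times\mathcal{Y}_{n-1,m};H^{1}_{\sharp}(Y_n)/\mathbb{R})$: the above identity says precisely that $\nabla_{y_n}u_n$ coincides (as a distribution) with $\nabla u+\sum_{j=1}^{n}\nabla_{y_j}u_j$ minus the lower-order pieces, i.e.\ with the $u_n$-component already produced by Theorem~\ref{gradkarmulti}, which lies in $L^2$; hence $u_n$ has a square-integrable $y_n$-gradient, and normalizing to mean zero in $Y_n$ gives membership in the quotient space. I would finish by noting the limit is independent of the choice of $\rho$ (two potentials differ by a $y_n$-divergence-free field, which pairs to zero against a gradient), so $u_n$ is well defined and the whole argument runs along a single subsequence extracted once at the start.

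The main obstacle I expect is the bookkeeping in the integration-by-parts step: controlling the error terms that arise because $\rho(x/\hat\varepsilon_n)$ also picks up $x$-derivatives from the slower scales $x/\hat\varepsilon_1,\ldots,x/\hat\varepsilon_{n-1}$ inside $v_1$, and verifying that all such cross terms are genuinely lower order. This is exactly where joint well-separatedness of the two lists of scales is needed — it guarantees the ratios $\hat\varepsilon_n/\hat\varepsilon_j\to0$ for $j<n$ so those cross terms carry a vanishing prefactor — and it is the step that forces the hypothesis to be "well-separated" rather than merely "separated."
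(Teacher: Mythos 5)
The paper itself offers no proof of Theorem \ref{T vw} --- it only points to Theorem 2.78 in \cite{Per2} and Theorem 7 in \cite{FHOLP} --- so the only comparison possible is with those cited arguments, which your sketch does resemble in outline: a potential for the mean-zero factor $c$, integration by parts in $x$, and identification of the limit through the gradient characterization of Theorem \ref{gradkarmulti}. The genuine gap sits exactly in the step you dismiss as bookkeeping. After writing $c=\nabla_{y_{n}}\cdot \rho$ and integrating by parts, the cross terms produced by differentiating $v_{1}\left( x,x/\hat{\varepsilon}_{1},\ldots ,x/\hat{\varepsilon}_{n-1}\right) $ have the form $\hat{\varepsilon}_{j}^{-1}\int_{\Omega _{T}}u^{\varepsilon }\,(\nabla _{y_{j}}v_{1})\,v_{2}\cdot \rho \left( x/\hat{\varepsilon}_{n}\right) dxdt$ for $j<n$: they carry the \emph{divergent} prefactor $\hat{\varepsilon}_{j}^{-1}$, not a vanishing prefactor $\hat{\varepsilon}_{n}/\hat{\varepsilon}_{j}$ as you assert, and they are not $O(1)$ since $u^{\varepsilon }$ is only bounded in $L^{2}$. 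Mere separatedness, i.e.\ $\hat{\varepsilon}_{n}/\hat{\varepsilon}_{j}\rightarrow 0$, does nothing for them. In the cited proofs these terms are disposed of by \emph{iterating} the integration by parts: each pass converts the offending term into new ones gaining a factor of order $\hat{\varepsilon}_{n}/\hat{\varepsilon}_{j}$, leaving after $l$ passes a residual of size comparable to $\hat{\varepsilon}_{j}^{-1}\left( \hat{\varepsilon}_{n}/\hat{\varepsilon}_{j}\right) ^{l}$, and it is precisely the well-separatedness condition $\lim_{\varepsilon \rightarrow 0}\varepsilon _{k}^{-1}\left( \varepsilon _{k+1}/\varepsilon _{k}\right) ^{l}=0$ that makes this residual vanish for some finite $l$. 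Your explanation of why the hypothesis must be ``well-separated'' rather than ``separated'' conflates the two notions and misses this mechanism; without the iteration your passage to the limit is unjustified.

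A secondary, fixable point: for the surviving term $-\int \nabla u^{\varepsilon }\cdot \rho \left( x/\hat{\varepsilon}_{n}\right) v_{1}v_{2}\,dxdt$ to converge to the pairing with $\nabla _{y_{n}}u_{n}$ alone, you need $\int_{Y_{n}}\rho \,dy_{n}=0$; this is not automatic for an arbitrary $\rho $ with $\nabla _{y_{n}}\cdot \rho =c$, since adding a constant vector preserves the divergence but changes the mean, and a constant field pairs nontrivially with $\nabla u$ and with $\nabla _{y_{j}}u_{j}$, $j<n$, over $Y_{n}$. The standard choice $\rho =\nabla _{y_{n}}\phi $ with $\Delta _{y_{n}}\phi =c$ and $\phi $ periodic repairs this; your closing remark about independence of the choice of $\rho $ does not, because divergence-free periodic fields need not have zero mean.
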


\begin{proof}
See Theorem 2.78 in \cite{Per2} or Theorem 7\ in \cite{FHOLP}.
\end{proof}

\section{The homogenization result}

We study the homogenization of the problem%
\begin{eqnarray}
\partial _{t}u^{\varepsilon }\left( x,t\right) -\nabla \cdot a\left( \frac{x%
}{\hat{\varepsilon}^{n}},\frac{t}{\check{\varepsilon}^{m}},\nabla
u^{\varepsilon }\left( x,t\right) \right) &=&f\left( x,t\right) \text{ in }%
\Omega _{T}\text{,}  \notag \\
u^{\varepsilon }\left( x,t\right) &=&0\text{ on }\partial \Omega \times (0,T)%
\text{,}  \label{equation1} \\
u^{\varepsilon }\left( x,0\right) &=&u^{0}\left( x\right) \text{ in }\Omega 
\text{,}  \notag
\end{eqnarray}%
where $f\in L^{2}(\Omega _{T})$ and $u^{0}\in L^{2}(\Omega )$. Here we
assume that%
\begin{equation*}
a:%
\mathbb{R}
^{nN}\times 
\mathbb{R}
^{m}\times 
\mathbb{R}
^{N}\rightarrow 
\mathbb{R}
^{N}
\end{equation*}%
satisfies the following structure conditions, where $C_{0}$ and $C_{1}$ are
positive constants and $0<\alpha \leq 1$:

\begin{enumerate}
\item[(i)] $a(y^{n},s^{m},0)=0$ for all $\left( y^{n},s^{m}\right) \in 
\mathbb{R}
^{nN}\times 
\mathbb{R}
^{m}$.

\item[(ii)] $a(\cdot ,\cdot ,\xi )$ is $\mathcal{Y}_{n,m}$-periodic in $%
\left( y^{n},s^{m}\right) $ and continuous for all $\xi \in 
\mathbb{R}
^{N}$.

\item[(iii)] $a(y^{n},s^{m},\cdot )$ is continuous for all $\left(
y^{n},s^{m}\right) \in 
\mathbb{R}
^{nN}\times 
\mathbb{R}
^{m}$.

\item[(iv)] $\left( a\left( y^{n},s^{m},\xi \right) -a\left( y^{n},s^{m},\xi
^{\prime }\right) \right) \cdot (\xi -\xi ^{\prime })\geq C_{0}\left\vert
\xi -\xi ^{\prime }\right\vert ^{2}$ for all
\end{enumerate}

$\ \ \ \left( y^{n},s^{m}\right) \in 
\mathbb{R}
^{nN}\times 
\mathbb{R}
^{m}$ and all $\xi ,\xi ^{\prime }\in 
\mathbb{R}
^{N}$.

\begin{enumerate}
\item[(v)] $\left\vert a\left( y^{n},s^{m},\xi \right) -a\left(
y^{n},s^{m},\xi ^{\prime }\right) \right\vert \leq C_{1}(1+\left\vert \xi
\right\vert +\left\vert \xi ^{\prime }\right\vert )^{1-\alpha }\left\vert
\xi -\xi ^{\prime }\right\vert ^{\alpha }$ for all
\end{enumerate}

$\ \ \ \left( y^{n},s^{m}\right) \in 
\mathbb{R}
^{nN}\times 
\mathbb{R}
^{m}$ and all $\xi ,\xi ^{\prime }\in 
\mathbb{R}
^{N}$.

\bigskip \noindent Under these conditions, problem (\ref{equation1})
possesses a unique solution, see Theorem 30.A (a) in \cite{ZeiIIB}, and the
a priori estimate%
\begin{equation*}
\left\Vert u^{\varepsilon }\right\Vert _{W_{2}^{1}(0,T;H_{0}^{1}(\Omega
),L^{2}(\Omega ))}<C
\end{equation*}%
holds true for some $C>0$, see Proposition 3.16 in\ \cite{Per2}. Finally we
assume that the lists $\left\{ \hat{\varepsilon}^{n}\right\} $ and $\left\{ 
\check{\varepsilon}^{m}\right\} $ in (\ref{equation1}) are jointly
well-separated.

In order to formulate the theorem below in a neat way we define some numbers
determined by how the scale functions present are related to each other. We
define $d_{i}$ and $\rho _{i}$, $i=1,\ldots ,n$, as follows:

\begin{enumerate}
\item[(I)] If 
\begin{equation*}
\underset{\varepsilon \rightarrow 0}{\lim }\frac{\check{\varepsilon}_{1}}{%
\left( \hat{\varepsilon}_{i}\right) ^{2}}=0\text{,}
\end{equation*}%
then $d_{i}=m$. If%
\begin{equation*}
\underset{\varepsilon \rightarrow 0}{\lim }\frac{\check{\varepsilon}_{j}}{%
\left( \hat{\varepsilon}_{i}\right) ^{2}}>0\text{ and }\underset{\varepsilon
\rightarrow 0}{\lim }\frac{\check{\varepsilon}_{j+1}}{\left( \hat{\varepsilon%
}_{i}\right) ^{2}}=0
\end{equation*}%
for some $j=1,\ldots ,m-1$, then $d_{i}=m-j$. If%
\begin{equation*}
\underset{\varepsilon \rightarrow 0}{\lim }\frac{\check{\varepsilon}_{m}}{%
\left( \hat{\varepsilon}_{i}\right) ^{2}}>0\text{,}
\end{equation*}%
then $d_{i}=0$.

\item[(II)] If%
\begin{equation*}
\underset{\varepsilon \rightarrow 0}{\lim }\frac{\left( \hat{\varepsilon}%
_{i}\right) ^{2}}{\check{\varepsilon}_{j}}=C\text{,}
\end{equation*}%
$0<C<\infty $, for some $j=1,\ldots ,m$ we say that we have resonance and we
let $\rho _{i}=C$, otherwise $\rho _{i}=0$.
\end{enumerate}

\noindent This means that $d_{i}$ is the number of temporal scales faster
than the square of the spatial scale in question and $\rho _{i}$ indicates
whether there is resonance or not.

We are now prepared to give and prove the main theorem of the paper. Here $%
W_{2_{{\large \sharp }}}^{1}(S;H_{\sharp }^{1}(Y)/%
\mathbb{R}
,L_{\sharp }^{2}(Y)/%
\mathbb{R}
)$ denotes the space of all functions $u$ such that $u\in L_{\sharp
}^{2}(S;H_{\sharp }^{1}(Y)/%
\mathbb{R}
)$ and $\partial _{s}u\in L_{\sharp }^{2}(S;(H_{\sharp }^{1}(Y)/%
\mathbb{R}
)^{\prime })$.

\begin{theorem}
\label{first inline}Let $\left\{ u^{\varepsilon }\right\} $ be a sequence of
solutions in $W_{2}^{1}(0,T;H_{0}^{1}(\Omega ),L^{2}(\Omega ))$ to (\ref%
{equation1}). Then it holds that%
\begin{equation}
u^{\varepsilon }\left( x,t\right) \rightarrow u\left( x,t\right) \text{ in }%
L^{2}(\Omega _{T})\text{,}  \label{equation 2}
\end{equation}%
\begin{equation}
u^{\varepsilon }\left( x,t\right) \rightharpoonup u\left( x,t\right) \text{
in }L^{2}(0,T;H_{0}^{1}(\Omega ))  \label{equation3}
\end{equation}%
and%
\begin{equation*}
\nabla u^{\varepsilon }\left( x,t\right) \overset{n+1,m+1}{\rightharpoonup }%
\nabla u\left( x,t\right) +\overset{n}{\underset{j=1}{\sum }}\nabla
_{y_{j}}u_{j}\left( x,t,y^{j},s^{m-d_{j}}\right) \text{,}
\end{equation*}%
where $u\in W_{2}^{1}(0,T;H_{0}^{1}(\Omega ),L^{2}(\Omega ))$ is the unique
solution to%
\begin{eqnarray*}
\partial _{t}u\left( x,t\right) -\nabla \cdot b\left( x,t,\nabla u\left(
x,t\right) \right) &=&f\left( x,t\right) \text{ in }\Omega _{T}\text{,} \\
u\left( x,t\right) &=&0\text{ on }\partial \Omega \times (0,T)\text{,} \\
u\left( x,0\right) &=&u^{0}\left( x\right) \text{ in }\Omega
\end{eqnarray*}%
with%
\begin{gather*}
b\left( x,t,\nabla u\left( x,t\right) \right) \\
=\int_{\mathcal{Y}_{n,m}}a\left( y^{n},s^{m},\nabla u\left( x,t\right)
+\dsum\limits_{j=1}^{n}\nabla _{y_{j}}u_{j}\!\left(
x,t,y^{j},s^{m-d_{j}}\right) \right) dy^{n}ds^{m}\text{,}
\end{gather*}%
where $u_{i}\in L^{2}(\Omega _{T}\times \mathcal{Y}_{i-1,m-d_{i}};H_{\sharp
}^{1}(Y_{i})/%
\mathbb{R}
)$ for $i=1,\ldots ,n$. Here $u_{i}$, for $i=1,\ldots ,n$, are the unique
solutions to the system of local problems%
\begin{gather}
\rho _{i}\partial _{s_{m-d_{i}}}u_{i}\left( x,t,y^{i},s^{m-d_{i}}\right) 
\notag \\
-\nabla _{y_{i}}\!\cdot \!\!\int_{S_{m-d_{i}+1}}\!\!\cdots
\!\int_{S_{m}}\!\int_{Y_{i+1}}\!\!\cdots \!\int_{Y_{n}}\!a\!\left(
y^{n},s^{m},\nabla u\left( x,t\right) +\!\dsum\limits_{j=1}^{n}\nabla
_{y_{j}}u_{j}\!\left( x,t,y^{j},s^{m-d_{j}}\right) \!\!\right)
\label{equation5} \\
\times dy_{n}\cdots dy_{i+1}ds_{m}\cdots ds_{m-d_{i}+1}=0  \notag
\end{gather}%
if we assume that $u_{i}\in L^{2}(\Omega _{T}\times \mathcal{Y}%
_{i-1,m-d_{i}-1};W_{2_{{\large \sharp }}}^{1}(S_{m-d_{i}};H_{\sharp
}^{1}(Y_{i})/%
\mathbb{R}
,L_{\sharp }^{2}(Y_{i})/%
\mathbb{R}
))$ when $\rho _{i}\neq 0$.
\end{theorem}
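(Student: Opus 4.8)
The plan is to run the standard energy and monotonicity homogenization scheme, adapted to the multiscale evolution setting and combined with the perturbed test functions method for the nonlinear term. First I would establish the a priori estimates: testing (\ref{equation1}) with $u^{\varepsilon}$ and using the coercivity that follows from (i) and (iv) (take $\xi'=0$), together with the growth bound that follows from (i) and (v) (again $\xi'=0$), yields that $\{u^{\varepsilon}\}$ is bounded in $L^{\infty}(0,T;L^{2}(\Omega))\cap L^{2}(0,T;H_{0}^{1}(\Omega))$, that $\{\partial_{t}u^{\varepsilon}\}$ is bounded in $L^{2}(0,T;H^{-1}(\Omega))$, and that $\{a(\tfrac{x}{\hat{\varepsilon}^{n}},\tfrac{t}{\check{\varepsilon}^{m}},\nabla u^{\varepsilon})\}$ is bounded in $L^{2}(\Omega_{T})^{N}$; in particular $\{u^{\varepsilon}\}$ is bounded in $W_{2}^{1}(0,T;H_{0}^{1}(\Omega),L^{2}(\Omega))$. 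By Theorem \ref{gradkarmulti} we then get (\ref{equation 2}), (\ref{equation3}) and, along a subsequence, a preliminary representation $\nabla u^{\varepsilon}\overset{n+1,m+1}{\rightharpoonup}\nabla u+\sum_{j=1}^{n}\nabla_{y_{j}}u_{j}$ with $u_{j}$ a priori depending on all of $s^{m}$, and by Theorem \ref{thmultiskale} a flux limit $a(\tfrac{x}{\hat{\varepsilon}^{n}},\tfrac{t}{\check{\varepsilon}^{m}},\nabla u^{\varepsilon})\overset{n+1,m+1}{\rightharpoonup}a_{0}\in L^{2}(\Omega_{T}\times\mathcal{Y}_{n,m})^{N}$. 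Inserting test functions depending only on $(x,t)$ into the weak form of (\ref{equation1}) and letting $\varepsilon\to0$ identifies $u$ as a solution of the limit equation with $b(x,t,\nabla u)=\int_{\mathcal{Y}_{n,m}}a_{0}\,dy^{n}ds^{m}$, so everything reduces to identifying $a_{0}$ and the correctors.

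Next I would extract the local problems together with the reduced temporal dependence. For each $i=1,\ldots,n$, I would test the weak form of (\ref{equation1}) with functions of the form $\hat{\varepsilon}_{i}\,\phi(x,t)\,c_{1}(\tfrac{x}{\hat{\varepsilon}_{1}},\ldots,\tfrac{x}{\hat{\varepsilon}_{i-1}})\,v(\tfrac{x}{\hat{\varepsilon}_{i}})\,c_{2}(\tfrac{t}{\check{\varepsilon}_{1}},\ldots,\tfrac{t}{\check{\varepsilon}_{m}})$, with $\phi$ smooth and compactly supported in $\Omega_{T}$, $v\in C_{\sharp}^{\infty}(Y_{i})/\mathbb{R}$, and $c_{1},c_{2}$ smooth and periodic. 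The scaling by $\hat{\varepsilon}_{i}$ is chosen so that in the diffusion term only $\nabla_{y_{i}}v$ survives at leading order, so that the source term and the contributions of all coarser spatial scales vanish, and so that the time-derivative term $\int_{\Omega_{T}}u^{\varepsilon}\partial_{t}(\cdots)$ can be rewritten, using the very weak multiscale convergence of $u^{\varepsilon}/\hat{\varepsilon}_{i}$ from Theorem \ref{T vw} and its analogues for the intermediate scales, as a sum over $j$ of contributions carrying the prefactor $(\hat{\varepsilon}_{i})^{2}/\check{\varepsilon}_{j}$. Such a contribution survives in the limit precisely when $(\hat{\varepsilon}_{i})^{2}/\check{\varepsilon}_{j}$ has a finite nonzero limit, i.e. the resonance case (II), where it yields $\rho_{i}\partial_{s_{m-d_{i}}}u_{i}$; it vanishes when the prefactor tends to zero; and when the prefactor tends to infinity the only way the limit equation can be consistent is that $\partial_{s_{j}}u_{i}=0$, so $u_{i}$ does not depend on the $d_{i}$ temporal variables associated with the scales faster than $(\hat{\varepsilon}_{i})^{2}$, i.e. $u_{i}\in L^{2}(\Omega_{T}\times\mathcal{Y}_{i-1,m-d_{i}};H_{\sharp}^{1}(Y_{i})/\mathbb{R})$, with the additional $s_{m-d_{i}}$-regularity when $\rho_{i}\neq0$. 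Collecting all the limit relations gives the variational form of (\ref{equation5}) with $a_{0}$ in place of $a(y^{n},s^{m},\nabla u+\sum_{j}\nabla_{y_{j}}u_{j})$.

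Finally I would identify $a_{0}$ by a Minty-type monotonicity argument with perturbed test functions. Let $\Phi_{\varepsilon}(x,t)=\nabla\psi_{0}(x,t)+\sum_{j=1}^{n}\nabla_{y_{j}}\psi_{j}(x,t,\tfrac{x}{\hat{\varepsilon}^{j}},\tfrac{t}{\check{\varepsilon}^{m-d_{j}}})$ with smooth $\psi_{0},\ldots,\psi_{n}$ approximating $u,u_{1},\ldots,u_{n}$. By (iv),
\[
0\le\int_{\Omega_{T}}\Bigl(a(\tfrac{x}{\hat{\varepsilon}^{n}},\tfrac{t}{\check{\varepsilon}^{m}},\nabla u^{\varepsilon})-a(\tfrac{x}{\hat{\varepsilon}^{n}},\tfrac{t}{\check{\varepsilon}^{m}},\Phi_{\varepsilon})\Bigr)\cdot(\nabla u^{\varepsilon}-\Phi_{\varepsilon})\,dxdt.
\]
Testing (\ref{equation1}) with $u^{\varepsilon}$ gives $\int_{\Omega_{T}}a(\tfrac{x}{\hat{\varepsilon}^{n}},\tfrac{t}{\check{\varepsilon}^{m}},\nabla u^{\varepsilon})\cdot\nabla u^{\varepsilon}=\int_{\Omega_{T}}fu^{\varepsilon}-\tfrac{1}{2}\|u^{\varepsilon}(\cdot,T)\|_{L^{2}(\Omega)}^{2}+\tfrac{1}{2}\|u^{0}\|_{L^{2}(\Omega)}^{2}$, and since $u^{\varepsilon}(\cdot,T)\rightharpoonup u(\cdot,T)$ in $L^{2}(\Omega)$ (continuity of the trace at $T$ on $W_{2}^{1}$), it follows that $\limsup_{\varepsilon}\int_{\Omega_{T}}a(\cdots,\nabla u^{\varepsilon})\cdot\nabla u^{\varepsilon}\le\int_{\Omega_{T}}fu-\tfrac{1}{2}\|u(\cdot,T)\|_{L^{2}(\Omega)}^{2}+\tfrac{1}{2}\|u^{0}\|_{L^{2}(\Omega)}^{2}$; the right-hand side equals $\int_{\Omega_{T}\times\mathcal{Y}_{n,m}}a_{0}\cdot(\nabla u+\sum_{j}\nabla_{y_{j}}u_{j})$ after testing the limit equation with $u$ and the ($a_{0}$-version of the) local problems with $u_{i}$, the resonance terms dropping out by $S_{m-d_{i}}$-periodicity. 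Passing to the limit in the monotonicity inequality, using multiscale convergence for the cross terms and continuity (iii) with the growth (v) to get $a(\cdots,\Phi_{\varepsilon})\to a(y^{n},s^{m},\Phi)$ strongly, then letting $\psi_{0},\ldots,\psi_{n}\to u,u_{1},\ldots,u_{n}$ and running the usual Minty device (replace $\Phi$ by $\nabla u+\sum_{j}\nabla_{y_{j}}u_{j}-\lambda W$, divide by $\lambda$, let $\lambda\to0$) yields $a_{0}=a(y^{n},s^{m},\nabla u+\sum_{j}\nabla_{y_{j}}u_{j})$ a.e., which gives the stated $b$ and turns the derived local problems into (\ref{equation5}). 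Uniqueness of $u$ and of the $u_{i}$ follows from the strict monotonicity (iv) by testing the difference of two solutions with itself, so the whole sequence (not merely a subsequence) converges. I expect the genuinely delicate step to be the passage to the limit in the time-derivative terms when deriving the local problems — that is, showing via very weak multiscale convergence that exactly the temporal scales faster than $(\hat{\varepsilon}_{i})^{2}$ are averaged out and that a resonant scale contributes precisely $\rho_{i}\partial_{s_{m-d_{i}}}u_{i}$.
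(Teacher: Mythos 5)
Your proposal is correct and follows essentially the same route as the paper: a priori bounds plus the two multiscale compactness theorems, oscillating test functions scaled by the appropriate scale factors combined with very weak multiscale convergence to extract the temporal independencies and the local problems in the resonance and nonresonance cases, and a perturbed-test-function/Minty monotonicity argument (with the energy identity and weak lower semicontinuity handling the $\int_{0}^{T}\langle \partial _{t}u^{\varepsilon },u^{\varepsilon }\rangle \,dt$ term) to identify $a_{0}$. The only cosmetic difference is that you keep the prefactor fixed at $\hat{\varepsilon}_{i}$ and argue by consistency of the limit when $(\hat{\varepsilon}_{i})^{2}/\check{\varepsilon}_{j}\rightarrow \infty $, whereas the paper renormalizes by choosing $r_{\varepsilon }=\check{\varepsilon}_{\lambda }/\hat{\varepsilon}_{i}$ for each such temporal scale in turn; the two devices are equivalent.
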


\begin{proof}
The lists $\left\{ \hat{\varepsilon}^{n}\right\} $ and $\left\{ \check{%
\varepsilon}^{m}\right\} $ of scales are jointly well-separated and since $%
\left\{ u^{\varepsilon }\right\} $ is bounded in $W_{2}^{1}(0,T;H_{0}^{1}(%
\Omega ),L^{2}(\Omega ))$ Theorem \ref{gradkarmulti} is applicable and
hence, up to a subsequence,%
\begin{eqnarray*}
u^{\varepsilon }\left( x,t\right) &\rightarrow &u\left( x,t\right) \text{ in 
}L^{2}(\Omega _{T})\text{,} \\
u^{\varepsilon }\left( x,t\right) &\rightharpoonup &u\left( x,t\right) \text{
in }L^{2}(0,T;H_{0}^{1}(\Omega ))
\end{eqnarray*}%
and 
\begin{equation*}
\nabla u^{\varepsilon }\left( x,t\right) \overset{n+1,m+1}{\rightharpoonup }%
\nabla u\left( x,t\right) +\overset{n}{\underset{j=1}{\sum }}\nabla
_{y_{j}}u_{j}(x,t,y^{j},s^{m})\text{,}
\end{equation*}%
where $u\in W_{2}^{1}(0,T;H_{0}^{1}(\Omega ),L^{2}(\Omega ))$ and $u_{j}\in
L^{2}(\Omega _{T}\times \mathcal{Y}_{j-1,m};H_{\sharp }^{1}(Y_{j})/%
\mathbb{R}
)$ for $j=1,\ldots ,n$.

The weak form of (\ref{equation1}) reads: find $u^{\varepsilon }\in
W_{2}^{1}(0,T;H_{0}^{1}(\Omega ),L^{2}(\Omega ))$ such that%
\begin{gather}
\int_{\Omega _{T}}-u^{\varepsilon }\left( x,t\right) v\left( x\right)
\partial _{t}c\left( t\right) +a\left( \frac{x}{\hat{\varepsilon}^{n}},\frac{%
t}{\check{\varepsilon}^{m}},\nabla u^{\varepsilon }\left( x,t\right) \right)
\cdot \nabla v\left( x\right) c\left( t\right) dxdt  \notag \\
=\int_{\Omega _{T}}f\left( x,t\right) v\left( x\right) c\left( t\right) dxdt
\label{svag form}
\end{gather}%
for all $v\in H_{0}^{1}(\Omega )$ and $c\in D(0,T)$. By choosing $\xi
^{\prime }=0$ in (v) we have 
\begin{equation*}
\left\vert a\left( y^{n},s^{m},\xi \right) \right\vert \leqslant C_{1}\left(
1+\left\vert \xi \right\vert \right) ^{1-\alpha }\left\vert \xi \right\vert
^{\alpha }
\end{equation*}%
and since%
\begin{equation*}
C_{1}\left( 1+\left\vert \xi \right\vert \right) ^{1-\alpha }\left\vert \xi
\right\vert ^{\alpha }<C_{1}\left( 1+\left\vert \xi \right\vert \right)
^{1-\alpha }\left( 1+\left\vert \xi \right\vert \right) ^{\alpha }
\end{equation*}%
we obtain%
\begin{equation}
\left\vert a\left( y^{n},s^{m},\xi \right) \right\vert <C_{1}\left(
1+\left\vert \xi \right\vert \right) \text{.}  \label{flyttning}
\end{equation}%
The boundedness of $\left\{ u^{\varepsilon }\right\} $ in $%
L^{2}(0,T;H_{0}^{1}(\Omega ))$\ together with (\ref{flyttning}) gives, up to
a subsequence, that 
\begin{equation*}
a\left( \frac{x}{\hat{\varepsilon}^{n}},\frac{t}{\check{\varepsilon}^{m}}%
,\nabla u^{\varepsilon }\left( x,t\right) \right) \overset{n+1,m+1}{%
\rightharpoonup }a_{0}\left( x,t,y^{n},s^{m}\right)
\end{equation*}%
for some $a_{0}\in L^{2}(\Omega _{T}\times \mathcal{Y}_{n,m})^{N}$ due to
Theorem \ref{thmultiskale}. We let $\varepsilon $ tend to zero in (\ref{svag
form}) and obtain%
\begin{gather}
\int_{\Omega _{T}}-u\left( x,t\right) v\left( x\right) \partial _{t}c\left(
t\right) +\left( \int_{\mathcal{Y}_{n,m}}a_{0}\left( x,t,y^{n},s^{m}\right)
dy^{n}ds^{m}\right) \cdot \nabla v\left( x\right) c\left( t\right) dxdt 
\notag \\
=\int_{\Omega _{T}}f\left( x,t\right) v\left( x\right) c\left( t\right) dxdt%
\text{,}  \label{homog_probl}
\end{gather}%
which is the homogenized problem if we can prove that 
\begin{equation*}
a_{0}\left( x,t,y^{n},s^{m}\right) =a\left( y^{n},s^{m},\nabla u\left(
x,t\right) +\sum\limits_{j=1}^{n}\nabla _{y_{j}}u_{j}\left(
x,t,y^{j},s^{m-d_{j}}\right) \right)
\end{equation*}%
with $u$\ and $u_{j}$\ as given in the theorem. To characterize $a_{0}$ we
will use the system of local problems (\ref{equation5}), and deriving this
will be our next aim.

In (\ref{svag form}) we will use test functions defined according to the
following. Let $r_{\varepsilon }=r\left( \varepsilon \right) $ be a sequence
of positive numbers tending to zero as $\varepsilon $ does. Fix $i=1,\ldots
,n$ and choose%
\begin{equation*}
v\left( x\right) =r_{\varepsilon }v_{1}\left( x\right) v_{2}\left( \frac{x}{%
\hat{\varepsilon}_{1}}\right) \cdots v_{i+1}\left( \frac{x}{\hat{\varepsilon}%
_{i}}\right)
\end{equation*}%
and%
\begin{equation*}
c\left( t\right) =c_{1}\left( t\right) c_{2}\left( \frac{t}{\check{%
\varepsilon}_{1}}\right) \cdots c_{\lambda +1}\left( \frac{t}{\check{%
\varepsilon}_{\lambda }}\right) \text{, }\lambda =1,\ldots ,m
\end{equation*}%
with $v_{1}\in D(\Omega )$,$\,v_{j}\in C_{\sharp }^{\infty }(Y_{j-1})$ for $%
j=2,\ldots ,i$, $v_{i+1}\in C_{\sharp }^{\infty }(Y_{i})/%
\mathbb{R}
$, $c_{1}\in D(0,T)$\ and$\ c_{l}\in C_{\sharp }^{\infty }(S_{l-1})$ for $%
l=2,\ldots ,\lambda +1$. We get%
\begin{gather*}
\int_{\Omega _{T}}-u^{\varepsilon }\left( x,t\right) v_{1}\left( x\right)
v_{2}\left( \frac{x}{\hat{\varepsilon}_{1}}\right) \cdots v_{i+1}\left( 
\frac{x}{\hat{\varepsilon}_{i}}\right) \\
\times \left( r_{\varepsilon }\partial _{t}c_{1}\left( t\right) c_{2}\left( 
\frac{t}{\check{\varepsilon}_{1}}\right) \cdots c_{\lambda +1}\left( \frac{t%
}{\check{\varepsilon}_{\lambda }}\right) \right. \\
+\sum_{l=2}^{\lambda +1}\left. \frac{r_{\varepsilon }}{\check{\varepsilon}%
_{l-1}}c_{1}\left( t\right) c_{2}\left( \frac{t}{\check{\varepsilon}_{1}}%
\right) \cdots \partial _{s_{l-1}}c_{l}\left( \frac{t}{\check{\varepsilon}%
_{l-1}}\right) \cdots c_{\lambda +1}\left( \frac{t}{\check{\varepsilon}%
_{\lambda }}\right) \right) \\
+a\left( \frac{x}{\hat{\varepsilon}^{n}},\frac{t}{\check{\varepsilon}^{m}}%
,\nabla u^{\varepsilon }\left( x,t\right) \right) \cdot \left(
r_{\varepsilon }\nabla v_{1}\left( x\right) v_{2}\left( \frac{x}{\hat{%
\varepsilon}_{1}}\right) \cdots v_{i+1}\left( \frac{x}{\hat{\varepsilon}_{i}}%
\right) \right. \\
+\sum_{j=2}^{i+1}\left. \frac{r_{\varepsilon }}{\hat{\varepsilon}_{j-1}}%
v_{1}\left( x\right) v_{2}\left( \frac{x}{\hat{\varepsilon}_{1}}\right)
\cdots \nabla _{y_{j-1}}v_{j}\left( \frac{x}{\hat{\varepsilon}_{j-1}}\right)
\cdots v_{i+1}\left( \frac{x}{\hat{\varepsilon}_{i}}\right) \right) \\
\times c_{1}\left( t\right) c_{2}\left( \frac{t}{\check{\varepsilon}_{1}}%
\right) \cdots c_{\lambda +1}\left( \frac{t}{\check{\varepsilon}_{\lambda }}%
\right) dxdt \\
=\int_{\Omega _{T}}f\left( x,t\right) r_{\varepsilon }v_{1}\left( x\right)
v_{2}\left( \frac{x}{\hat{\varepsilon}_{1}}\right) \cdots v_{i+1}\left( 
\frac{x}{\hat{\varepsilon}_{i}}\right) \\
\times c_{1}\left( t\right) c_{2}\left( \frac{t}{\check{\varepsilon}_{1}}%
\right) \cdots c_{\lambda +1}\left( \frac{t}{\check{\varepsilon}_{\lambda }}%
\right) dxdt.
\end{gather*}%
Applying Theorem \ref{gradkarmulti} and the definition of $r_{\varepsilon }$%
, we may let $\varepsilon \rightarrow 0$ and get 
\begin{gather*}
\lim_{\varepsilon \rightarrow 0}\int_{\Omega _{T}}-u^{\varepsilon }\left(
x,t\right) v_{1}\left( x\right) v_{2}\left( \frac{x}{\hat{\varepsilon}_{1}}%
\right) \cdots v_{i+1}\left( \frac{x}{\hat{\varepsilon}_{i}}\right) \\
\times \left( \sum_{l=2}^{\lambda +1}\frac{r_{\varepsilon }}{\check{%
\varepsilon}_{l-1}}c_{1}\left( t\right) c_{2}\left( \frac{t}{\check{%
\varepsilon}_{1}}\right) \cdots \partial _{s_{l-1}}c_{l}\left( \frac{t}{%
\check{\varepsilon}_{l-1}}\right) \cdots c_{\lambda +1}\left( \frac{t}{%
\check{\varepsilon}_{\lambda }}\right) \right) \\
+a\left( \frac{x}{\hat{\varepsilon}^{n}},\frac{t}{\check{\varepsilon}^{m}}%
,\nabla u^{\varepsilon }\left( x,t\right) \right) \\
\cdot \sum_{j=2}^{i+1}\frac{r_{\varepsilon }}{\hat{\varepsilon}_{j-1}}%
v_{1}\left( x\right) v_{2}\left( \frac{x}{\hat{\varepsilon}_{1}}\right)
\cdots \nabla _{y_{j-1}}v_{j}\left( \frac{x}{\hat{\varepsilon}_{j-1}}\right)
\cdots v_{i+1}\left( \frac{x}{\hat{\varepsilon}_{i}}\right) \\
\times c_{1}\left( t\right) c_{2}\left( \frac{t}{\check{\varepsilon}_{1}}%
\right) \cdots c_{\lambda +1}\left( \frac{t}{\check{\varepsilon}_{\lambda }}%
\right) dxdt=0
\end{gather*}%
if we omit the terms passing to zero. Rewriting we obtain%
\begin{gather}
\lim_{\varepsilon \rightarrow 0}\int_{\Omega _{T}}-\frac{1}{\hat{\varepsilon}%
_{i}}u^{\varepsilon }\left( x,t\right) \sum\limits_{l=2}^{\lambda +1}\frac{%
r_{\varepsilon }\hat{\varepsilon}_{i}}{\check{\varepsilon}_{l-1}}v_{1}\left(
x\right) v_{2}\left( \frac{x}{\hat{\varepsilon}_{1}}\right) \cdots
v_{i+1}\left( \frac{x}{\hat{\varepsilon}_{i}}\right)  \notag \\
\times c_{1}\left( t\right) c_{2}\left( \frac{t}{\check{\varepsilon}_{1}}%
\right) \cdots \partial _{s_{l-1}}c_{l}\left( \frac{t}{\check{\varepsilon}%
_{l-1}}\right) \cdots c_{\lambda +1}\left( \frac{t}{\check{\varepsilon}%
_{\lambda }}\right)  \notag \\
+a\left( \frac{x}{\hat{\varepsilon}^{n}},\frac{t}{\check{\varepsilon}^{m}}%
,\nabla u^{\varepsilon }\left( x,t\right) \right)  \label{3equat} \\
\cdot \sum\limits_{j=2}^{i+1}\frac{r_{\varepsilon }}{\hat{\varepsilon}_{j-1}}%
v_{1}\left( x\right) v_{2}\left( \frac{x}{\hat{\varepsilon}_{1}}\right)
\cdots \nabla _{y_{j-1}}v_{j}\left( \frac{x}{\hat{\varepsilon}_{j-1}}\right)
\cdots v_{i+1}\left( \frac{x}{\hat{\varepsilon}_{i}}\right)  \notag \\
\times c_{1}\left( t\right) c_{2}\left( \frac{t}{\check{\varepsilon}_{1}}%
\right) \cdots c_{\lambda +1}\left( \frac{t}{\check{\varepsilon}_{\lambda }}%
\right) ~dxdt=0,  \notag
\end{gather}%
where we have factored out $\frac{1}{\hat{\varepsilon}_{i}}$ from the first
sum to make it obvious that it is possible to pass to the limit by means of
very weak $\left( i+1,\lambda +1\right) $-scale convergence. Suppose that $%
\left\{ \frac{r_{\varepsilon }\hat{\varepsilon}_{i}}{\check{\varepsilon}%
_{\lambda }}\right\} $ and $\left\{ \frac{r_{\varepsilon }}{\hat{\varepsilon}%
_{i}}\right\} $\ are bounded. This implies that%
\begin{equation*}
\frac{r_{\varepsilon }\hat{\varepsilon}_{i}}{\check{\varepsilon}_{\lambda -j}%
}\rightarrow 0\text{, }j=1,\ldots ,\lambda -1
\end{equation*}%
and%
\begin{equation*}
\frac{r_{\varepsilon }}{\hat{\varepsilon}_{i-j}}\rightarrow 0\text{, }%
j=1,\ldots ,i-1
\end{equation*}%
as $\varepsilon \rightarrow 0$ due to the fact that the scales are
separated. Hence, under these assumptions (\ref{3equat}) turns into%
\begin{gather}
\lim_{\varepsilon \rightarrow 0}\int_{\Omega _{T}}-\frac{1}{\hat{\varepsilon}%
_{i}}u^{\varepsilon }\left( x,t\right) \frac{r_{\varepsilon }\hat{\varepsilon%
}_{i}}{\check{\varepsilon}_{\lambda }}v_{1}\left( x\right) v_{2}\left( \frac{%
x}{\hat{\varepsilon}_{1}}\right) \cdots v_{i+1}\left( \frac{x}{\hat{%
\varepsilon}_{i}}\right)  \notag \\
\times c_{1}\left( t\right) c_{2}\left( \frac{t}{\check{\varepsilon}_{1}}%
\right) \cdots \partial _{s_{\lambda }}c_{\lambda +1}\left( \frac{t}{\check{%
\varepsilon}_{\lambda }}\right) +a\left( \frac{x}{\hat{\varepsilon}^{n}},%
\frac{t}{\check{\varepsilon}^{m}},\nabla u^{\varepsilon }\left( x,t\right)
\right)  \label{springboard} \\
\cdot \frac{r_{\varepsilon }}{\hat{\varepsilon}_{i}}v_{1}\left( x\right)
v_{2}\left( \frac{x}{\hat{\varepsilon}_{1}}\right) \cdots v_{i}\left( \frac{x%
}{\hat{\varepsilon}_{i-1}}\right) \nabla _{y_{i}}v_{i+1}\left( \frac{x}{\hat{%
\varepsilon}_{i}}\right)  \notag \\
\times c_{1}\left( t\right) c_{2}\left( \frac{t}{\check{\varepsilon}_{1}}%
\right) \cdots c_{\lambda +1}\left( \frac{t}{\check{\varepsilon}_{\lambda }}%
\right) dxdt=0  \notag
\end{gather}%
which will be our springboard when deriving both the independencies of the
local time variables in the corrector functions and the local problems. This
will be done for the two different cases nonresonance and resonance.

\textbf{Case 1:} Nonresonance ($\rho _{i}=0$). First we derive the
independencies for $d_{i}>0$. Let $\lambda $ successively be $m,\ldots
,m-d_{i}+1$. If $r_{\varepsilon }=\frac{\check{\varepsilon}_{\lambda }}{\hat{%
\varepsilon}_{i}}$\ we have from the chosen values of $\lambda $ and the
meaning of $d_{i}$ that%
\begin{equation*}
\frac{r_{\varepsilon }\hat{\varepsilon}_{i}}{\check{\varepsilon}_{\lambda }}%
=1
\end{equation*}%
and%
\begin{equation}
\frac{r_{\varepsilon }}{\hat{\varepsilon}_{i}}=\frac{\check{\varepsilon}%
_{\lambda }}{\left( \hat{\varepsilon}_{i}\right) ^{2}}\rightarrow 0
\label{two differ cas2}
\end{equation}%
as $\varepsilon \rightarrow 0$. Hence, we may use (\ref{springboard}) for
this choice of $r_{\varepsilon }$ and we have 
\begin{gather*}
\lim_{\varepsilon \rightarrow 0}\int_{\Omega _{T}}-\frac{1}{\hat{\varepsilon}%
_{i}}u^{\varepsilon }\left( x,t\right) v_{1}\left( x\right) v_{2}\left( 
\frac{x}{\hat{\varepsilon}_{1}}\right) \cdots v_{i+1}\left( \frac{x}{\hat{%
\varepsilon}_{i}}\right) \\
\times c_{1}\left( t\right) c_{2}\left( \frac{t}{\check{\varepsilon}_{1}}%
\right) \cdots \partial _{s_{\lambda }}c_{\lambda +1}\left( \frac{t}{\check{%
\varepsilon}_{\lambda }}\right) +a\left( \frac{x}{\hat{\varepsilon}^{n}},%
\frac{t}{\check{\varepsilon}^{m}},\nabla u^{\varepsilon }\left( x,t\right)
\right) \\
\cdot \frac{\check{\varepsilon}_{\lambda }}{\left( \hat{\varepsilon}%
_{i}\right) ^{2}}v_{1}\left( x\right) v_{2}\left( \frac{x}{\hat{\varepsilon}%
_{1}}\right) \cdots v_{i}\left( \frac{x}{\hat{\varepsilon}_{i-1}}\right)
\nabla _{y_{i}}v_{i+1}\left( \frac{x}{\hat{\varepsilon}_{i}}\right) \\
\times c_{1}\left( t\right) c_{2}\left( \frac{t}{\check{\varepsilon}_{1}}%
\right) \cdots c_{\lambda +1}\left( \frac{t}{\check{\varepsilon}_{\lambda }}%
\right) dxdt=0\text{.}
\end{gather*}%
We let $\varepsilon $ tend to zero and obtain, due to Theorem \ref{T vw} and
(\ref{two differ cas2}), that%
\begin{gather*}
\int_{\Omega _{T}}\int_{\mathcal{Y}_{i,\lambda }}-u_{i}\left(
x,t,y^{i},s^{\lambda }\right) v_{1}\left( x\right) v_{2}\left( y_{1}\right)
\cdots v_{i+1}\left( y_{i}\right) \\
\times c_{1}\left( t\right) c_{2}\left( s_{1}\right) \cdots \partial
_{s_{\lambda }}c_{\lambda +1}\left( s_{\lambda }\right) dy^{i}ds^{\lambda
}dxdt=0
\end{gather*}%
and by the variational lemma we have%
\begin{equation*}
\int_{S_{\lambda }}-u_{i}\left( x,t,y^{i},s^{\lambda }\right) \partial
_{s_{\lambda }}c_{\lambda +1}\left( s_{\lambda }\right) ds_{\lambda }=0
\end{equation*}%
almost everywhere for all $c_{\lambda +1}\in C_{\sharp }^{\infty
}(S_{\lambda })$. This means that $u_{i}$ is independent of $%
s_{m-d_{i}+1},\ldots ,s_{m}$.

We proceed by deriving the local problems and for this purpose we choose $%
r_{\varepsilon }=\hat{\varepsilon}_{i}$ and $\lambda =m-d_{i},$ where $%
d_{i}\geq 0$. Since $d_{i}\geq 0$ and $\rho _{i}=0$ we conclude that 
\begin{equation*}
\frac{r_{\varepsilon }\hat{\varepsilon}_{i}}{\check{\varepsilon}_{\lambda }}=%
\frac{\left( \hat{\varepsilon}_{i}\right) ^{2}}{\check{\varepsilon}_{m-d_{i}}%
}\rightarrow 0
\end{equation*}%
as $\varepsilon \rightarrow 0$ and 
\begin{equation*}
\frac{r_{\varepsilon }}{\hat{\varepsilon}_{i}}=1\text{,}
\end{equation*}%
which means that (\ref{springboard}) is valid and we get 
\begin{gather*}
\lim_{\varepsilon \rightarrow 0}\int_{\Omega _{T}}-\frac{1}{\hat{\varepsilon}%
_{i}}u^{\varepsilon }\left( x,t\right) \frac{\left( \hat{\varepsilon}%
_{i}\right) ^{2}}{\check{\varepsilon}_{m-d_{i}}}v_{1}\left( x\right)
v_{2}\left( \frac{x}{\hat{\varepsilon}_{1}}\right) \cdots v_{i+1}\left( 
\frac{x}{\hat{\varepsilon}_{i}}\right) \\
\times c_{1}\left( t\right) c_{2}\left( \frac{t}{\check{\varepsilon}_{1}}%
\right) \cdots \partial _{s_{m-d_{i}}}c_{m-d_{i}+1}\left( \frac{t}{\check{%
\varepsilon}_{m-d_{i}}}\right) +a\left( \frac{x}{\hat{\varepsilon}^{n}},%
\frac{t}{\check{\varepsilon}^{m}},\nabla u^{\varepsilon }\left( x,t\right)
\right) \\
\cdot v_{1}\left( x\right) v_{2}\left( \frac{x}{\hat{\varepsilon}_{1}}%
\right) \cdots v_{i}\left( \frac{x}{\hat{\varepsilon}_{i-1}}\right) \nabla
_{y_{i}}v_{i+1}\left( \frac{x}{\hat{\varepsilon}_{i}}\right) \\
\times c_{1}\left( t\right) c_{2}\left( \frac{t}{\check{\varepsilon}_{1}}%
\right) \cdots c_{m-d_{i}+1}\left( \frac{t}{\check{\varepsilon}_{m-d_{i}}}%
\right) dxdt=0\text{.}
\end{gather*}%
As $\varepsilon \rightarrow 0$ we obtain%
\begin{gather*}
\int_{\Omega _{T}}\int_{\mathcal{Y}_{n,m}}a_{0}\left( x,t,y^{n},s^{m}\right)
\\
\cdot v_{1}\left( x\right) v_{2}\left( y_{1}\right) \cdots v_{i}\left(
y_{i-1}\right) \nabla _{y_{i}}v_{i+1}\left( y_{i}\right) \\
\times c_{1}\left( t\right) c_{2}\left( s_{1}\right) \cdots
c_{m-d_{i}+1}\left( s_{m-d_{i}}\right) dy^{n}ds^{m}dxdt=0
\end{gather*}%
and, finally,%
\begin{gather}
\int_{S_{m-d_{i}+1}}\cdots \int_{S_{m}}\int_{Y_{i}}\cdots
\int_{Y_{n}}a_{0}\left( x,t,y^{n},s^{m}\right)  \label{case 1} \\
\cdot \nabla _{y_{i}}v_{i+1}\left( y_{i}\right) dy_{n}\cdots
dy_{i}ds_{m}\cdots ds_{m-d_{i}+1}=0  \notag
\end{gather}%
almost everywhere for all $v_{i+1}\in H_{\sharp }^{1}(Y_{i})/%
\mathbb{R}
$, which is the weak form of the local problem in this nonresonance case.

\textbf{Case 2:} Resonance ($\rho _{i}=C$). As in the first case we begin
with the independencies for $d_{i}>0$. Again, let $\lambda $ successively be 
$m,\ldots ,m-d_{i}+1$. Now choose $r_{\varepsilon }=\frac{\check{\varepsilon}%
_{\lambda }}{\hat{\varepsilon}_{i}}$ directly implying that%
\begin{equation*}
\frac{r_{\varepsilon }\hat{\varepsilon}_{i}}{\check{\varepsilon}_{\lambda }}%
=1
\end{equation*}%
and 
\begin{equation*}
\frac{r_{\varepsilon }}{\hat{\varepsilon}_{i}}=\frac{\check{\varepsilon}%
_{\lambda }}{\left( \hat{\varepsilon}_{i}\right) ^{2}}\rightarrow 0
\end{equation*}%
when $\varepsilon \rightarrow 0$, by the restriction of $\lambda $ and the
definition of $d_{i}$ and $\rho _{i}$. Thus, (\ref{springboard}) turns into%
\begin{gather*}
\lim_{\varepsilon \rightarrow 0}\int_{\Omega _{T}}-\frac{1}{\hat{\varepsilon}%
_{i}}u^{\varepsilon }\left( x,t\right) v_{1}\left( x\right) v_{2}\left( 
\frac{x}{\hat{\varepsilon}_{1}}\right) \cdots v_{i+1}\left( \frac{x}{\hat{%
\varepsilon}_{i}}\right) \\
\times c_{1}\left( t\right) c_{2}\left( \frac{t}{\check{\varepsilon}_{1}}%
\right) \cdots \partial _{s_{\lambda }}c_{\lambda +1}\left( \frac{t}{\check{%
\varepsilon}_{\lambda }}\right) +a\left( \frac{x}{\hat{\varepsilon}^{n}},%
\frac{t}{\check{\varepsilon}^{m}},\nabla u^{\varepsilon }\left( x,t\right)
\right) \\
\cdot \frac{\check{\varepsilon}_{\lambda }}{\left( \hat{\varepsilon}%
_{i}\right) ^{2}}v_{1}\left( x\right) v_{2}\left( \frac{x}{\hat{\varepsilon}%
_{1}}\right) \cdots v_{i}\left( \frac{x}{\hat{\varepsilon}_{i-1}}\right)
\nabla _{y_{i}}v_{i+1}\left( \frac{x}{\hat{\varepsilon}_{i}}\right) \\
\times c_{1}\left( t\right) c_{2}\left( \frac{t}{\check{\varepsilon}_{1}}%
\right) \cdots c_{\lambda +1}\left( \frac{t}{\check{\varepsilon}_{\lambda }}%
\right) dxdt=0
\end{gather*}%
and a passage to the limit gives%
\begin{gather*}
\int_{\Omega _{T}}\int_{\mathcal{Y}_{i,\lambda }}-u_{i}\left(
x,t,y^{i},s^{\lambda }\right) v_{1}\left( x\right) v_{2}\left( y_{1}\right)
\cdots v_{i+1}\left( y_{i}\right) \\
\times c_{1}\left( t\right) c_{2}\left( s_{1}\right) \cdots \partial
_{s_{\lambda }}c_{\lambda +1}\left( s_{\lambda }\right) dy^{i}ds^{\lambda
}dxdt=0\text{.}
\end{gather*}%
Hence,%
\begin{equation*}
\int_{S_{\lambda }}-u_{i}\left( x,t,y^{i},s^{\lambda }\right) \partial
_{s_{\lambda }}c_{\lambda +1}\left( s_{\lambda }\right) ds_{\lambda }=0
\end{equation*}%
almost everywhere for all $c_{\lambda +1}\in C_{\sharp }^{\infty
}(S_{\lambda })$, and thus $u_{i}$ is independent of $s_{\lambda }$.

To extract the local problem we choose $r_{\varepsilon }=\hat{\varepsilon}%
_{i}$ and $\lambda =m-d_{i}$, where $d_{i}\geq 0$, which gives 
\begin{equation*}
\frac{r_{\varepsilon }\hat{\varepsilon}_{i}}{\check{\varepsilon}_{\lambda }}=%
\frac{\left( \hat{\varepsilon}_{i}\right) ^{2}}{\check{\varepsilon}_{m-d_{i}}%
}\rightarrow \rho _{i}
\end{equation*}%
as $\varepsilon \rightarrow 0$ and 
\begin{equation*}
\frac{r_{\varepsilon }}{\hat{\varepsilon}_{i}}=1
\end{equation*}%
and from (\ref{springboard}) we then have%
\begin{gather*}
\lim_{\varepsilon \rightarrow 0}\int_{\Omega _{T}}-\frac{1}{\hat{\varepsilon}%
_{i}}u^{\varepsilon }\left( x,t\right) \frac{\left( \hat{\varepsilon}%
_{i}\right) ^{2}}{\check{\varepsilon}_{m-d_{i}}}v_{1}\left( x\right)
v_{2}\left( \frac{x}{\hat{\varepsilon}_{1}}\right) \cdots v_{i+1}\left( 
\frac{x}{\hat{\varepsilon}_{i}}\right) \\
\times c_{1}\left( t\right) c_{2}\left( \frac{t}{\check{\varepsilon}_{1}}%
\right) \cdots \partial _{s_{m-d_{i}}}c_{m-d_{i}+1}\left( \frac{t}{\check{%
\varepsilon}_{m-d_{i}}}\right) +a\left( \frac{x}{\hat{\varepsilon}^{n}},%
\frac{t}{\check{\varepsilon}^{m}},\nabla u^{\varepsilon }\left( x,t\right)
\right) \\
\cdot v_{1}\left( x\right) v_{2}\left( \frac{x}{\hat{\varepsilon}_{1}}%
\right) \cdots v_{i}\left( \frac{x}{\hat{\varepsilon}_{i-1}}\right) \nabla
_{y_{i}}v_{i+1}\left( \frac{x}{\hat{\varepsilon}_{i}}\right) \\
\times c_{1}\left( t\right) c_{2}\left( \frac{t}{\check{\varepsilon}_{1}}%
\right) \cdots c_{m-d_{i}+1}\left( \frac{t}{\check{\varepsilon}_{m-d_{i}}}%
\right) dxdt=0\text{.}
\end{gather*}%
Letting $\varepsilon $ tend to zero and applying Theorem \ref{T vw} we
obtain 
\begin{gather*}
\int_{\Omega _{T}}\int_{\mathcal{Y}_{n,m}}-\rho _{i}u_{i}\left(
x,t,y^{i},s^{m-d_{i}}\right) v_{1}\left( x\right) v_{2}\left( y_{1}\right)
\cdots v_{i+1}\left( y_{i}\right) \\
\times c_{1}\left( t\right) c_{2}\left( s_{1}\right) \cdots \partial
_{s_{m-d_{i}}}c_{m-d_{i}+1}\left( s_{m-d_{i}}\right) +a_{0}\left(
x,t,y^{n},s^{m}\right) \\
\cdot v_{1}\left( x\right) v_{2}\left( y_{1}\right) \cdots v_{i}\left(
y_{i-1}\right) \nabla _{y_{i}}v_{i+1}\left( y_{i}\right) \\
\times c_{1}\left( t\right) c_{2}\left( s_{1}\right) \cdots
c_{m-d_{i}+1}\left( s_{m-d_{i}}\right) dy^{n}ds^{m}dxdt=0
\end{gather*}%
and hence, we end up with%
\begin{gather}
\int_{S_{m-d_{i}}}\cdots \int_{S_{m}}\int_{Y_{i}}\cdots \int_{Y_{n}}-\rho
_{i}u_{i}\left( x,t,y^{i},s^{m-d_{i}}\right) v_{i+1}\left( y_{i}\right) 
\notag \\
\times \partial _{s_{m-d_{i}}}c_{m-d_{i}+1}\left( s_{m-d_{i}}\right)
+a_{0}\left( x,t,y^{n},s^{m}\right)  \label{locCas2} \\
\cdot \nabla _{y_{i}}v_{i+1}\left( y_{i}\right) c_{m-d_{i}+1}\left(
s_{m-d_{i}}\right) dy_{n}\cdots dy_{i}ds_{m}\cdots ds_{m-d_{i}}=0  \notag
\end{gather}%
almost everywhere for all $v_{i+1}\in H_{\sharp }^{1}(Y_{i})/%
\mathbb{R}
$ and $c_{m-d_{i}+1}\in C_{\sharp }^{\infty }(S_{m-d_{i}})$, the weak form
of the local problem in this second case.

What remains is to characterize $a_{0}$ and to this end we use perturbed
test functions, see \cite{Ev1} and \cite{Ev2}, according to%
\begin{gather*}
p^{k}\left( x,t,y^{j},s^{m}\right) \\
=p^{k,0}\left( x,t\right) +\sum_{j=1}^{n}p^{k,j}\left(
x,t,y^{j},s^{m-d_{j}}\right) +\delta c\left( x,t,y^{n},s^{m}\right) \text{,}
\end{gather*}%
where $p^{k,0}\in D(\Omega _{T})^{N}$, $p^{k,j}\in D(\Omega _{T};C_{\sharp
}^{\infty }(\mathcal{Y}_{j,m-d_{j}}))^{N}$ for $j=1,\ldots ,n$, $c\in
D(\Omega _{T};C_{\sharp }^{\infty }(\mathcal{Y}_{n,m}))^{N}$ and $\delta >0$%
.\ We choose these sequences such that%
\begin{equation*}
p^{k,0}\left( x,t\right) \rightarrow \nabla u\left( x,t\right) \text{ in }%
L^{2}(\Omega _{T})^{N}\text{,}
\end{equation*}%
\begin{equation*}
p^{k,j}\left( x,t,y^{j},s^{m-d_{j}}\right) \rightarrow \nabla
_{y_{j}}u_{j}\left( x,t,y^{j},s^{m-d_{j}}\right) \text{ in }L^{2}(\Omega
_{T}\times \mathcal{Y}_{j,m-d_{j}})^{N}
\end{equation*}%
and such that they converge almost everywhere to the same limits as $%
k\rightarrow \infty $, see p. 388 in \cite{Kuf}. We introduce the notation 
\begin{equation*}
p_{\varepsilon }^{k}\left( x,t\right) =p^{k}\left( x,t,\frac{x}{\hat{%
\varepsilon}^{n}},\frac{t}{\check{\varepsilon}^{m}}\right) \text{.}
\end{equation*}%
Using property (iv)\ we get 
\begin{equation*}
\left( a\left( \frac{x}{\hat{\varepsilon}^{n}},\frac{t}{\check{\varepsilon}%
^{m}},\nabla u^{\varepsilon }\right) -a\left( \frac{x}{\hat{\varepsilon}^{n}}%
,\frac{t}{\check{\varepsilon}^{m}},p_{\varepsilon }^{k}\right) \right) \cdot
(\nabla u^{\varepsilon }\left( x,t\right) -p_{\varepsilon }^{k}\left(
x,t\right) )\geq 0
\end{equation*}%
and integration and expansion leads to%
\begin{gather}
\int_{\Omega _{T}}a\left( \frac{x}{\hat{\varepsilon}^{n}},\frac{t}{\check{%
\varepsilon}^{m}},\nabla u^{\varepsilon }\right) \cdot \nabla u^{\varepsilon
}\left( x,t\right) -a\left( \frac{x}{\hat{\varepsilon}^{n}},\frac{t}{\check{%
\varepsilon}^{m}},\nabla u^{\varepsilon }\right) \cdot p_{\varepsilon
}^{k}\left( x,t\right)  \label{star2} \\
-a\left( \frac{x}{\hat{\varepsilon}^{n}},\frac{t}{\check{\varepsilon}^{m}}%
,p_{\varepsilon }^{k}\right) \cdot \nabla u^{\varepsilon }\left( x,t\right)
+a\left( \frac{x}{\hat{\varepsilon}^{n}},\frac{t}{\check{\varepsilon}^{m}}%
,p_{\varepsilon }^{k}\right) \cdot p_{\varepsilon }^{k}\left( x,t\right)
dxdt\geq 0\text{.}  \notag
\end{gather}%
Due to Theorem 30.A (c)\ in \cite{ZeiIIB} we may replace $vc$ with $%
u^{\varepsilon }$\ in\ (\ref{svag form}) and get another way of expressing
the first term in (\ref{star2})\emph{\ }and hence\ it can be written as%
\begin{gather}
\int_{\Omega _{T}}f\left( x,t\right) u^{\varepsilon }\left( x,t\right)
-a\left( \frac{x}{\hat{\varepsilon}^{n}},\frac{t}{\check{\varepsilon}^{m}}%
,\nabla u^{\varepsilon }\right) \cdot p_{\varepsilon }^{k}\left( x,t\right) 
\notag \\
-a\left( \frac{x}{\hat{\varepsilon}^{n}},\frac{t}{\check{\varepsilon}^{m}}%
,p_{\varepsilon }^{k}\right) \cdot \nabla u^{\varepsilon }\left( x,t\right)
+a\left( \frac{x}{\hat{\varepsilon}^{n}},\frac{t}{\check{\varepsilon}^{m}}%
,p_{\varepsilon }^{k}\right) \cdot p_{\varepsilon }^{k}\left( x,t\right) dxdt
\label{bluttan} \\
-\int_{0}^{T}\left\langle \partial _{t}u^{\varepsilon }\left( t\right)
,u^{\varepsilon }\left( t\right) \right\rangle _{H^{-1}(\Omega
),H_{0}^{1}(\Omega )}dt\geq 0\text{.}  \notag
\end{gather}%
We note that $p^{k}$, $a\left( y^{n},s^{m},p^{k}\right) $ and their product
are admissible test functions and since 
\begin{equation*}
-\lim_{\varepsilon \rightarrow 0}\inf \!\int_{0}^{T}\!\!\left\langle
\partial _{t}u^{\varepsilon }\left( t\right) ,u^{\varepsilon }\left(
t\right) \right\rangle _{H^{-1}(\Omega ),H_{0}^{1}(\Omega )}dt\leq
-\!\int_{0}^{T}\!\!\left\langle \partial _{t}u\left( t\right) ,u\left(
t\right) \right\rangle _{H^{-1}(\Omega ),H_{0}^{1}(\Omega )}\!dt
\end{equation*}%
(see p. 12--13 in \cite{NgWo1}) we get, up to a subsequence, that%
\begin{gather}
\int_{\Omega _{T}}\int_{\mathcal{Y}_{n,m}}f\left( x,t\right) u\left(
x,t\right) -a_{0}\left( x,t,y^{n},s^{m}\right) \cdot p^{k}\left(
x,t,y^{n},s^{m}\right)  \notag \\
-a\left( y^{n},s^{m},p^{k}\right) \cdot \left( \nabla u\left( x,t\right)
+\sum\limits_{j=1}^{n}\nabla _{y_{j}}u_{j}\left(
x,t,y^{j},s^{m-d_{j}}\right) \right)  \notag \\
+a\left( y^{n},s^{m},p^{k}\right) \cdot p^{k}\left( x,t,y^{n},s^{m}\right)
dy^{n}ds^{m}dxdt  \label{epsigrans} \\
-\int_{0}^{T}\left\langle \partial _{t}u\left( t\right) ,u\left( t\right)
\right\rangle _{H^{-1}(\Omega ),H_{0}^{1}(\Omega )}dt\geq 0  \notag
\end{gather}%
when $\varepsilon $ tends to zero. We proceed by letting\emph{\ }$k$ tend to
infinity. From the choice of $p^{k}$ we have that%
\begin{equation*}
p^{k}\left( x,t,y^{n},s^{m}\right) \rightarrow \nabla u\left( x,t\right)
+\sum\limits_{j=1}^{n}\nabla _{y_{j}}u_{j}\left(
x,t,y^{j},s^{m-d_{j}}\right) +\delta c\left( x,t,y^{n},s^{m}\right)
\end{equation*}%
in $L^{2}(\Omega _{T}\times \mathcal{Y}_{n,m})^{N}$ and almost everywhere in 
$\Omega _{T}\times \mathcal{Y}_{n,m}$. Furthermore%
\begin{equation*}
a\left( y^{n},s^{m},p^{k}\right) \rightarrow a\left( y^{n},s^{m},\nabla
u+\sum\limits_{j=1}^{n}\nabla _{y_{j}}u_{j}+\delta c\right)
\end{equation*}%
almost everywhere in $\Omega _{T}\times \mathcal{Y}_{n,m}$ and hence%
\begin{gather*}
a\left( y^{n},s^{m},p^{k}\right) \cdot p^{k}\left( x,t,y^{n},s^{m}\right)
\rightarrow a\left( y^{n},s^{m},\nabla u+\sum\limits_{j=1}^{n}\nabla
_{y_{j}}u_{j}+\delta c\right) \\
\cdot \left( \nabla u\left( x,t\right) +\sum\limits_{j=1}^{n}\nabla
_{y_{j}}u_{j}\left( x,t,y^{j},s^{m-d_{j}}\right) +\delta c\left(
x,t,y^{n},s^{m}\right) \right)
\end{gather*}%
almost everywhere in $\Omega _{T}\times \mathcal{Y}_{n,m}$. When we pass to
the limit in (\ref{epsigrans}) we will use Lebesgue's generalized majorized
convergence theorem (Theorem (19a) p. 1015 in \cite{ZeiIIB}) for the third
and fourth term where we go through the details for the fourth term.
Choosing $\xi =p^{k}$ in (\ref{flyttning}) we have that%
\begin{equation}
\left\vert a\left( y^{n},s^{m},p^{k}\right) \right\vert \leq C_{1}\left(
1+\left\vert p^{k}\left( x,t,y^{n},s^{m}\right) \right\vert \right) \text{.}
\label{begr}
\end{equation}%
Successively applying Cauchy-Schwarz inequality and (\ref{begr}) we get%
\begin{eqnarray*}
\left\vert a\left( y^{n},s^{m},p^{k}\right) \cdot p^{k}\left(
x,t,y^{n},s^{m}\right) \right\vert &\leq &\left\vert a\left(
y^{n},s^{m},p^{k}\right) \right\vert \left\vert p^{k}\left(
x,t,y^{n},s^{m}\right) \right\vert \\
&\leq &C_{1}\left( 1+\left\vert p^{k}\left( x,t,y^{n},s^{m}\right)
\right\vert \right) \left\vert p^{k}\left( x,t,y^{n},s^{m}\right) \right\vert
\\
&=&C_{1}\left( \left\vert p^{k}\left( x,t,y^{n},s^{m}\right) \right\vert
+\left\vert p^{k}\left( x,t,y^{n},s^{m}\right) \right\vert ^{2}\right) \text{%
.}
\end{eqnarray*}%
Letting $k\rightarrow \infty $, we have%
\begin{gather*}
\int_{\Omega _{T}}\int_{\mathcal{Y}_{n,m}}\left\vert p^{k}\left(
x,t,y^{n},s^{m}\right) \right\vert +\left\vert p^{k}\left(
x,t,y^{n},s^{m}\right) \right\vert ^{2}dy^{n}ds^{m}dxdt \\
\rightarrow \int_{\Omega _{T}}\int_{\mathcal{Y}_{n,m}}\left\vert \nabla
u\left( x,t\right) +\sum\limits_{j=1}^{n}\nabla _{y_{j}}u_{j}\left(
x,t,y^{j},s^{m-d_{j}}\right) +\delta c\left( x,t,y^{n},s^{m}\right)
\right\vert \\
+\left\vert \nabla u\left( x,t\right) +\sum\limits_{j=1}^{n}\nabla
_{y_{j}}u_{j}\left( x,t,y^{j},s^{m-d_{j}}\right) +\delta c\left(
x,t,y^{n},s^{m}\right) \right\vert ^{2}dy^{n}ds^{m}dxdt
\end{gather*}%
and hence, by Lebesgue's generalized majorized convergence theorem we
conclude that%
\begin{gather*}
\int_{\Omega _{T}}\int_{\mathcal{Y}_{n,m}}a\left( y^{n},s^{m},p^{k}\right)
\cdot p^{k}\left( x,t,y^{n},s^{m}\right) dy^{n}ds^{m}dxdt \\
\rightarrow \int_{\Omega _{T}}\int_{\mathcal{Y}_{n,m}}a\left(
y^{n},s^{m},\nabla u+\sum\limits_{j=1}^{n}\nabla _{y_{j}}u_{j}+\delta
c\right) \\
\cdot \left( \nabla u\left( x,t\right) +\sum\limits_{j=1}^{n}\nabla
_{y_{j}}u_{j}\left( x,t,y^{j},s^{m-d_{j}}\right) +\delta c\left(
x,t,y^{n},s^{m}\right) \right) dy^{n}ds^{m}dxdt\text{.}
\end{gather*}%
Thus, as $k$ tends to infinity in (\ref{epsigrans}) we find that%
\begin{gather*}
\int_{\Omega _{T}}\int_{\mathcal{Y}_{n,m}}f\left( x,t\right) u\left(
x,t\right) -a_{0}\left( x,t,y^{n},s^{m}\right) \\
\cdot \left( \nabla u\left( x,t\right) +\sum\limits_{j=1}^{n}\nabla
_{y_{j}}u_{j}\left( x,t,y^{j},s^{m-d_{j}}\right) +\delta c\left(
x,t,y^{n},s^{m}\right) \right) \\
-a\left( y^{n},s^{m},\nabla u+\sum\limits_{j=1}^{n}\nabla
_{y_{j}}u_{j}+\delta c\right) \\
\cdot \left( \nabla u\left( x,t\right) +\sum\limits_{j=1}^{n}\nabla
_{y_{j}}u_{j}\left( x,t,y^{j},s^{m-d_{j}}\right) \right) +a\left(
y^{n},s^{m},\nabla u+\sum\limits_{j=1}^{n}\nabla _{y_{j}}u_{j}+\delta
c\right) \\
\cdot \left( \nabla u\left( x,t\right) +\sum\limits_{j=1}^{n}\nabla
_{y_{j}}u_{j}\left( x,t,y^{j},s^{m-d_{j}}\right) +\delta c\left(
x,t,y^{n},s^{m}\right) \right) dy^{n}ds^{m}dxdt \\
-\int_{0}^{T}\left\langle \partial _{t}u\left( t\right) ,u\left( t\right)
\right\rangle _{H^{-1}(\Omega ),H_{0}^{1}(\Omega )}dt\geq 0\text{,}
\end{gather*}%
where some terms vanish directly and we have%
\begin{gather}
\int_{\Omega _{T}}\int_{\mathcal{Y}_{n,m}}f\left( x,t\right) u\left(
x,t\right) -a_{0}\left( x,t,y^{n},s^{m}\right)  \notag \\
\cdot \left( \nabla u\left( x,t\right) +\sum\limits_{j=1}^{n}\nabla
_{y_{j}}u_{j}\left( x,t,y^{j},s^{m-d_{j}}\right) +\delta c\left(
x,t,y^{n},s^{m}\right) \right)  \label{Assttar2} \\
+a\left( y^{n},s^{m},\nabla u+\sum\limits_{j=1}^{n}\nabla
_{y_{j}}u_{j}+\delta c\right) \cdot \delta c\left( x,t,y^{n},s^{m}\right)
dy^{n}ds^{m}dxdt  \notag \\
-\int_{0}^{T}\left\langle \partial _{t}u\left( t\right) ,u\left( t\right)
\right\rangle _{H^{-1}(\Omega ),H_{0}^{1}(\Omega )}dt\geq 0\text{.}  \notag
\end{gather}%
If we replace $vc$ by $u$ in (\ref{homog_probl}) we get%
\begin{gather}
\int_{0}^{T}\left\langle \partial _{t}u\left( t\right) ,u\left( t\right)
\right\rangle _{H^{-1}(\Omega ),H_{0}^{1}(\Omega )}dt  \notag \\
+\int_{\Omega _{T}}\left( \int_{\mathcal{Y}_{n,m}}a_{0}\left(
x,t,y^{n},s^{m}\right) dy^{n}ds^{m}\right) \cdot \nabla u\left( x,t\right)
dxdt  \label{Astar3} \\
=\int_{\Omega _{T}}f\left( x,t\right) u\left( x,t\right) dxdt  \notag
\end{gather}%
and with (\ref{Astar3}) in (\ref{Assttar2}) we obtain%
\begin{gather}
\int_{\Omega _{T}}\int_{\mathcal{Y}_{n,m}}\sum\limits_{j=1}^{n}-a_{0}\left(
x,t,y^{n},s^{m}\right) \cdot \nabla _{y_{j}}u_{j}\left(
x,t,y^{j},s^{m-d_{j}}\right)  \notag \\
-a_{0}\left( x,t,y^{n},s^{m}\right) \cdot \delta c\left(
x,t,y^{n},s^{m}\right)  \label{twostars} \\
+a\left( y^{n},s^{m},\nabla u+\sum\limits_{j=1}^{n}\nabla
_{y_{j}}u_{j}+\delta c\right) \cdot \delta c\left( x,t,y^{n},s^{m}\right)
dy^{n}ds^{m}dxdt\geq 0\text{.}  \notag
\end{gather}%
Using the local problems we will eliminate the first $n$ terms in (\ref%
{twostars}). We study them one at the time by letting $j$ successively be
equal to $1,\ldots ,n$. If $\rho _{j}=0$ we use the local problem (\ref{case
1}) from case 1 with $i=j$ and the corresponding term vanishes directly. If $%
\rho _{j}\neq 0$ then, by assumption, $u_{j}\in L^{2}(\Omega _{T}\times 
\mathcal{Y}_{j-1,m-d_{j}-1},W_{2_{{\large \sharp }}}^{1}(S_{m-d_{j}};H_{%
\sharp }^{1}(Y_{j})/%
\mathbb{R}
,L_{\sharp }^{2}(Y_{j})/%
\mathbb{R}
))$, which implies that $u_{j}(x,t,y^{j-1})\in W_{2_{{\large \sharp }%
}}^{1}(S_{m-d_{j}};H_{\sharp }^{1}(Y_{j})/%
\mathbb{R}
,L_{\sharp }^{2}(Y_{j})/%
\mathbb{R}
)$. Then, from (\ref{locCas2}) with $i=j$, we obtain that 
\begin{gather*}
\rho _{j}\partial _{s_{m-d_{j}}}u_{j}\left( x,t,y^{j},s^{m-d_{j}}\right) \\
=\left( \!\int_{S_{m-d_{j}+1}}\!\!\ldots
\!\int_{S_{m}}\!\int_{Y_{j+1}}\!\!\ldots \!\int_{Y_{n}}\!\!\!a_{0}\left(
x,t,y^{n},s^{m}\right) dy_{n}\cdots dy_{j+1}ds_{m}\cdots
ds_{m-d_{j}+1}\!\right) \!\nabla _{y_{j}}
\end{gather*}%
i.e. the first term in (\ref{twostars}) can be replaced with the derivative $%
\rho _{j}\partial _{s_{m-d_{j}}}u_{j}$. Thus, Corollary 4.1 in \cite{NgWo}
yields that the term in question vanishes. What remains of (\ref{twostars})
is%
\begin{gather*}
\int_{\Omega _{T}}\int_{\mathcal{Y}_{n,m}}\left( -a_{0}\left(
x,t,y^{n},s^{m}\right) +a\left( y^{n},s^{m},\nabla
u+\sum\limits_{j=1}^{n}\nabla _{y_{j}}u_{j}+\delta c\right) \right) \\
\cdot \delta c\left( x,t,y^{n},s^{m}\right) dy^{n}ds^{m}dxdt\geq 0\text{.}
\end{gather*}%
Dividing by $\delta $ and passing to the limit in the sense of letting $%
\delta $ tend to zero, we deduce that%
\begin{equation*}
a_{0}\left( x,t,y^{n},s^{m}\right) =a\left( y^{n},s^{m},\nabla
u+\sum\limits_{j=1}^{n}\nabla _{y_{j}}u_{j}\right) \text{.}
\end{equation*}%
Finally, by the uniqueness of $u$, the whole sequence converges and the
proof is complete.
\end{proof}

\section{An illustrative example}

In this section we investigate a specific nonlinear parabolic problem with a
number of rapid spatial and temporal scales, some of which are not powers of 
$\varepsilon $. More precisely we consider the (3,4)-scaled problem%
\begin{equation*}
\begin{array}[t]{rcl}
\!\!\partial _{t}u^{\varepsilon }\left( x,t\right) \!-\!\nabla \!\cdot
a\!\left( \frac{x}{2\sqrt{\varepsilon }},\!\frac{x}{\varepsilon ^{2}},\!%
\frac{t}{e^{\varepsilon }-1},\!\frac{t}{\ln (1+\varepsilon ^{2})},\!\frac{t}{%
\varepsilon ^{3}\ln \left( 1+\frac{1}{\varepsilon }\right) },\!\nabla
u^{\varepsilon }\left( x,t\right) \right) \!\!\!\! & \!=\!\!\! & 
\!\!\!f\left( x,t\right) \text{ in\ }\Omega _{T}\text{,} \\ 
u^{\varepsilon }\left( x,t\right) \!\!\!\! & \,\!{}\!=\! & \!\!\!0\text{ on\ 
}\partial \Omega \!\!\times \!\!(0,T)\text{,}\! \\ 
u^{\varepsilon }\left( x,0\right) \!\!\!\! & \,\!=\! & \!\!\!u^{0}\left(
x\right) \text{ in }\Omega \text{.}%
\end{array}%
\end{equation*}%
To apply Theorem \ref{first inline} we must be reassured that\ the two lists 
$\left\{ 2\sqrt{\varepsilon },\varepsilon ^{2}\right\} $ and $\left\{
e^{\varepsilon }-1,\ln (1+\varepsilon ^{2}),\varepsilon ^{3}\ln \left( 1+%
\frac{1}{\varepsilon }\right) \right\} $ are jointly well-separated. It
holds that\ 
\begin{equation*}
\lim_{\varepsilon \rightarrow 0}\frac{1}{2\sqrt{\varepsilon }}\left( \frac{%
\varepsilon ^{2}}{2\sqrt{\varepsilon }}\right) ^{1}=0\text{,}
\end{equation*}%
\begin{equation*}
\lim_{\varepsilon \rightarrow 0}\frac{1}{e^{\varepsilon }-1}\left( \frac{\ln
(1+\varepsilon ^{2})}{e^{\varepsilon }-1}\right) ^{3}=0
\end{equation*}%
and%
\begin{equation*}
\lim_{\varepsilon \rightarrow 0}\frac{1}{\ln (1+\varepsilon ^{2})}\left( 
\frac{\varepsilon ^{3}\ln \left( 1+\frac{1}{\varepsilon }\right) }{\ln
(1+\varepsilon ^{2})}\right) ^{3}=0\text{,}
\end{equation*}%
which implies that both the spatial and temporal scales are well-separated.
Moreover,%
\begin{equation*}
\lim_{\varepsilon \rightarrow 0}\frac{\ln (1+\varepsilon ^{2})}{\varepsilon
^{2}}=1\text{,}
\end{equation*}%
so we can remove duplicates and make the joint list $\!\left\{ 2\sqrt{%
\varepsilon },e^{\varepsilon }\!-\!1,\varepsilon ^{2},\varepsilon ^{3}\ln
\left( 1+\frac{1}{\varepsilon }\right) \right\} \!$, which is
well-separated. According to Definition \ref{Lists of scales} this shows
that our lists of scales are jointly well-separated. For the rest we assume
that our problem fulfils the assumptions of Theorem \ref{first inline}.

To begin with, from Theorem \ref{first inline} we know that the convergence
results (\ref{equation 2}) and (\ref{equation3}) hold, i.e. that%
\begin{equation*}
u^{\varepsilon }\left( x,t\right) \rightarrow u\left( x,t\right) \text{ in }%
L^{2}(\Omega _{T})
\end{equation*}%
and%
\begin{equation*}
u^{\varepsilon }\left( x,t\right) \rightharpoonup u\left( x,t\right) \text{
in }L^{2}(0,T;H_{0}^{1}(\Omega ))\text{.}
\end{equation*}

To determine the independencies and make the local problems more precise, we
need to identify which values of $d_{i}$ and $\rho _{i}$ to use. We recall
that $d_{i}$ is the number of temporal scales faster than the square of the
spatial scale in question and $\rho _{i}$ indicates whether there is
resonance or not. Let us start with the slowest spatial scale, i.e. $i=1$.
To find $d_{1}$ we investigate on the basis of (I) how the first spatial
scale is related to the temporal scales present in the problem. We have 
\begin{equation*}
\lim_{\varepsilon \rightarrow 0}\frac{e^{\varepsilon }-1}{\left( 2\sqrt{%
\varepsilon }\right) ^{2}}=\frac{1}{4}>0
\end{equation*}%
and 
\begin{equation*}
\lim_{\varepsilon \rightarrow 0}\frac{\ln (1+\varepsilon ^{2})}{(2\sqrt{%
\varepsilon })^{2}}=0\text{,}
\end{equation*}%
which means that $d_{1}=2$. For the scale in question we have resonance
since 
\begin{equation*}
\lim_{\varepsilon \rightarrow 0}\frac{\left( 2\sqrt{\varepsilon }\right) ^{2}%
}{e^{\varepsilon }-1}=4\text{,}
\end{equation*}%
i.e., $\rho _{1}=4$ according to (II). For $i=2$ we obtain%
\begin{equation*}
\lim_{\varepsilon \rightarrow 0}\frac{\varepsilon ^{3}\ln \left( 1+\frac{1}{%
\varepsilon }\right) }{\left( \varepsilon ^{2}\right) ^{2}}=\infty
\end{equation*}%
and hence $d_{2}=0$. We also observe that 
\begin{equation*}
\lim_{\varepsilon \rightarrow 0}\frac{\left( \varepsilon ^{2}\right) ^{2}}{%
\varepsilon ^{3}\ln \left( 1+\frac{1}{\varepsilon }\right) }=0\text{,}
\end{equation*}%
which means that $\rho _{2}=0$.

Now from Theorem \ref{first inline} we have%
\begin{equation*}
\nabla u^{\varepsilon }\left( x,t\right) \overset{3,4}{\rightharpoonup }%
\nabla u\left( x,t\right) +\nabla _{y_{1}}u_{1}\left( x,t,y_{1},s_{1}\right)
+\nabla _{y_{2}}u_{2}\left( x,t,y^{2},s^{3}\right) \text{,}
\end{equation*}%
where $u\in W_{2}^{1}(0,T;H_{0}^{1}(\Omega ),L^{2}(\Omega ))$, $u_{1}\in
L^{2}(\Omega _{T};W_{2_{{\large \sharp }}}^{1}(S_{1};H_{\sharp }^{1}(Y_{1})/%
\mathbb{R}
,L_{\sharp }^{2}(Y_{1})/%
\mathbb{R}
))$ and $u_{2}\in L^{2}(\Omega _{T}\times \mathcal{Y}_{1,3};H_{\sharp
}^{1}(Y_{2})/%
\mathbb{R}
)$. Here $u$ is the unique solution to%
\begin{eqnarray*}
\partial _{t}u\left( x,t\right) -\nabla \cdot b\left( x,t,\nabla u\left(
x,t\right) \right) &=&f\left( x,t\right) \text{ in }\Omega _{T}\text{,} \\
u\left( x,t\right) &=&0\text{ on }\partial \Omega \times (0,T)\text{,} \\
u\left( x,0\right) &=&u^{0}\left( x\right) \text{ in }\Omega
\end{eqnarray*}%
with 
\begin{gather*}
b\left( x,t,\nabla u\left( x,t\right) \right) \\
=\int_{\mathcal{Y}_{2,3}}a\left( y^{2},s^{3},\nabla u\left( x,t\right)
+\nabla _{y_{1}}u_{1}\left( x,t,y_{1},s_{1}\right) +\nabla
_{y_{2}}u_{2}\left( x,t,y^{2},s^{3}\right) \right) dy^{2}ds^{3}
\end{gather*}%
and we have the two local problems%
\begin{gather*}
4\partial _{s_{1}}u_{1}\left( x,t,y_{1},s_{1}\right) -\nabla _{y_{1}}\cdot
\int_{S_{2}}\int_{S_{3}}\int_{Y_{2}}a\left( y^{2},s^{3},\nabla u\left(
x,t\right) \right. \\
+\left. \nabla _{y_{1}}u_{1}\left( x,t,y_{1},s_{1}\right) +\nabla
_{y_{2}}u_{2}\left( x,t,y^{2},s^{3}\right) \right) dy_{2}ds_{3}ds_{2}=0
\end{gather*}%
and%
\begin{equation*}
-\nabla _{y_{2}}\cdot a\left( y^{2},s^{3},\nabla u\left( x,t\right) +\nabla
_{y_{1}}u_{1}\left( x,t,y_{1},s_{1}\right) +\nabla _{y_{2}}u_{2}\left(
x,t,y^{2},s^{3}\right) \right) =0\text{.}
\end{equation*}

\end{document}